\newtheorem{theorem}{Theorem}[section]
\newtheorem{lemma}[theorem]{Lemma}
\theoremstyle{definition}
\newtheorem{proposition}[theorem]{Proposition}
\theoremstyle{remark}
\numberwithin{equation}{section}
\newcommand{\dif}{\mathrm{d}}
\newcommand{\e}[1]{\mathbf{e}_{#1}}
\newcommand{\te}[1]{\widetilde{\mathbf{e}}_{#1}}
\newcommand{\tte}[1]{\widetilde{\widetilde{\mathbf{e}}}_{#1}}
\DeclareMathOperator{\B}{\mathbf{B}}
\DeclareMathOperator{\HH}{\mathbf{H}}
\DeclareMathOperator{\K}{K}
\DeclareMathOperator{\SF}{S}
\DeclareMathOperator{\KN}{K_N}
\DeclareMathOperator{\rank}{rank}
\begin{document}


\title[Minimal hypersurfaces in $\mathbb{S}^5$]
{Minimal hypersurfaces in $\mathbb{S}^5$ with vanishing Gauss-Kronecker curvature}

\author{Marcos M. Diniz}
\address{Universidade Federal do Par\'a\\ Instituto de Ci\^encias Exatas e Naturais}
\email{mdiniz@ufpa.br}
\thanks{CAPES - PROCAD--NF/2010}

\author{Jos\'{e} Antonio M. Vilhena}
\address{Universidade Federal do Par\'a\\ Instituto de Ci\^encias Exatas e Naturais}
\email{vilhena@ufpa.br}
\thanks{CAPES - PROCAD--NF/2010}

\author{Juan F. Z. Zapata}
\address{Universidade de S\~ao Paulo\\ Instituto de Matem\'atica e Estat\'istica}
\email{jfzzmat@ime.usp.br}
\thanks{CNPq - BOLSA   {\flushright{\bfseries \large August 02, 2012}}}

\subjclass{53C42}

\keywords{Minimal hypersurfaces, Gauss-Kronecker curvature, complete hypersurfaces}
\date{August 02, 2012}


\begin{abstract}
In this paper we present a local description for complete minimal hypersurfaces in $\mathbb{S}^5$ 
with zero Gauss-Kronecker curvature, zero $3$-mean curvature and nowhere zero second fundamental form. 
\end{abstract}

\maketitle


\section*{Introduction}
Let $\mathbf{g}\! : M^2\hookrightarrow \mathbb{S}^{4}$ be a minimal immersion, where $M^2$ is a 2-dimensional manifold. Dajczer and Gromoll in \cite{Djaczer.1985} proved that if $\mathbf{g}$ has nowhere vanishing normal curvature, then the polar map of $\mathbf{g}$ is everywhere regular and provides a minimal hypersurface in $\mathbb{S}^{4}$ with Gauss-Kronecker curvature identically zero. In \cite{Brito.1987} de Almeida and Brito classified  the compact minimal hypersurfaces $M^3$ in $\mathbb{S}^{4}$ with zero Gauss-Kronecker curvature and non-vanishing second fundamental form on $M^3.$ Later, Ramanathan \cite{Ramanathan.1990} classified  the compact minimal hypersurfaces $M^3$ in $\mathbb{S}^{4}$ with zero Gauss-Kronecker curvature without the condition on the second fundamental form. In \cite{Hasanis.2007} Hasanis, Halilaj and Vlachos classified the complete minimal imersions $\mathbf{g}\! : M^2\hookrightarrow \mathbb{S}^{4}$ with Gauss-Kronecker curvature identically zero under some assumptions on the 
second fundamental form. In \cite{Asperti.2011} Asperti, Chaves and Sousa Jr. showed that the infimum of the absolute value of the Gauss-Kronecker curvature of a complete minimal hypersurfaces in 4-dimensional space form vanishes under assumptions on the Ricci curvature and classified the complete minimal hypersurfaces in 4-dimensional space form with Gauss-Kronecker curvature constant. 

In this paper, we consider $\mathbf{g}\! : M^2\hookrightarrow \mathbb{S}^{5}$ be a minimal immersion, where $M^2$ is a complete 2-dimensional manifold. Denote by $\K$ and $\KN$ (see \eqref{eq:Normal_Curvature}) the Gauss and scalar normal curvature, respectively. In Proposition \ref{Proposition1}  we show that $\mathbf{g}$ is superminimal if and only if   
\begin{equation*}
(\K-1)^2-\frac{1}{4}\KN=0.
\end{equation*}

In Theorem \ref{Teorema1} we give an example of a minimal hypersurface in $\mathbb{S}^5$ with vanishing Gauss-Kronecker  and 3-mean curvatures. This example is  a 2-spherical local bundle over a minimal surface in $\mathbb{S}^5$. Finally in Theorem \ref{Teorema2} we study locally the complete minimal hypersurfaces in $\mathbb{S}^5$ with zero Gauss-Kronecker curvature, zero $3$-mean curvature and nowhere zero second fundamental form.

\section{Preliminaries}

\subsection*{Isometric immersions in Euclidean sphere}
Let $\mathbf{g}\! : M^n\hookrightarrow \mathbb{S}^{n+p}$ an isometric immersion from an $n$-manifold $M^n$ into the $(n+p)$-sphere. Let $\mathcal{B} =\{ \e1, \e2, \ldots, \e{n+p}\}$ be an orthonormal frame on $\mathbb{S}^{n+p}$ \emph{adapted} to the immersion, in the sense that $\{\e1,\ldots,\e{n}\}$ spans $\mathbf{g}_*(TM)$, while $\{\e{n+1},\ldots,\e{n+p}\}$ spans $(\mathbf{g}_*(TM))^\bot$. Let  $\mathcal{B^*} =\{ \omega^1,\omega^2,\ldots,\omega^{n+p}\}$ its dual co-frame. The structure equations of $g(M^n)$, in terms of this co-frame are given by
\begin{equation}\label{eq:structure}
  \left\{
    \begin{aligned}
      \dif \mathbf{g}   &= \omega^i \otimes \e{i} ;\\
      \dif \e{i} &= - \omega^i \otimes \mathbf{g} +\omega^j_i\otimes \e{j}  				+\omega^\alpha_i \otimes \e{\alpha} ;\\
      \dif \e{\alpha} &= \omega^j_\alpha\otimes \e{j} +\omega^\beta_\alpha \otimes \e{\beta} ;\\
      \dif \omega^i &= - \omega^i_j \wedge \omega^j ,\quad 	\omega^i_j+\omega^j_i=0 ;\\
      \dif \omega^\alpha &= - \omega^\alpha_i \wedge \omega^i ,
    \end{aligned}
  \right.
\end{equation}
where $1\leqslant i,j\leqslant n$ and $n+1\leqslant \alpha, \beta \leqslant n+p$. Throughout all this work, we shall use small Latin letters, $i,j, \ldots$ (resp. Greek letters, $\alpha, \beta, \ldots$) for indices which take values over the range $\{1,\ldots,n\}$ (resp. $\{n+1,\ldots,n+p\}$) and we use the Einstein convention for summation over crossed indices. The Gauss, Codazzi and Ricci equations are given, respectively, by

\begin{equation}\label{eq:G-C-R}
  \left\{
    \begin{aligned}
      \dif \omega^i_j &= - \omega^i_k \wedge \omega^k_j  - \omega^i_\alpha \wedge \omega^\alpha_j + 1 \,  \omega^i \wedge \omega^j;\\
      \dif \omega^i_\alpha &= - \omega^i_k \wedge \omega^k_\alpha  - \omega^i_\beta \wedge \omega^\beta_\alpha;\\
      \dif \omega^\alpha_\beta &=- \omega^\alpha_i \wedge \omega^i_\beta - \omega^\alpha_\gamma \wedge \omega^\gamma_\beta.
    \end{aligned}
  \right.
\end{equation}
So, the curvature form $\Omega^i_j$ and normal curvature form $\Omega^\alpha_\beta$ are 
\begin{equation}\label{eq:Curvature}
  \left\{
    \begin{aligned}
  \Omega^i_j &= - \omega^i_\alpha \wedge \omega^\alpha_j + 1 \, \omega^i \wedge \omega^j,\quad &\Omega^i_j=\frac{1}{2}R^i_{jkl}\,\omega^k \wedge \omega^l;\\
  \Omega^\alpha_\beta &=- \omega^\alpha_i \wedge \omega^i_\beta , &\Omega^\alpha_\beta=\frac{1}{2}R^\alpha_{\beta kl}\,\omega^k \wedge \omega^l.
    \end{aligned}
  \right.
\end{equation}
We restrict these forms to $M^n$. Then 
\begin{equation*}
\omega^\alpha=0. 
\end{equation*}
Since 
\begin{equation*}
0=\dif \omega^\alpha = -\omega^\alpha_i \wedge \omega^i, 
\end{equation*}
by Catan's lemma we may write
\begin{equation}\label{eq:Cartan}
      \omega^\alpha_i =h^\alpha_{ij}\, \omega^j ,\quad h^\alpha_{ij}=h^\alpha_{ji}.	
\end{equation}
We call 
\begin{equation}\label{eq:2forma}
     \B=h^\alpha_{ij}\, \omega^i \otimes \omega^j \otimes \e\alpha
\end{equation}
the {\it second fundamental form} of the immersed manifold $M^n.$ The  {\it mean curvature vector} of $M^n$ is given by
\begin{equation}\label{eq:H}
     \HH=\frac{1}{n}\sum_i h^\alpha_{ii}\, \e\alpha.
\end{equation}
An immersion is said to be {\it minimal} if its mean curvature vanishes identically, i.e., if $\sum_i h^\alpha_{ii}=0$ for all $\alpha.$

\subsection*{Hypersurfaces}In the special case of hypersurfaces in $\mathbb{S}^5$, ($n=4, p=1$), taking a local orthonormal frame such that 
\begin{equation*}
h^5_{ij}=\lambda_i\, \delta^i_j,
\end{equation*}
then the second fundamental form writes
\begin{equation*}\label{eq:2formaDiag}
     \B=\lambda_{i}\, \omega^i \otimes \omega^i \otimes \e5,
\end{equation*}
the equation \eqref{eq:Cartan} is written 
\begin{equation}\label{eq:Cartan_Hyper}
      \omega^5_i =\lambda_{i}\, \omega^i.	
\end{equation}
the equations \eqref{eq:structure} writes
\begin{equation}\label{eq:structureHiper}
  \left\{
    \begin{aligned}
      \dif \mathbf{g}   &= \omega^i \otimes \e{i} ;\\
      \dif \e{i} &= - \omega^i \otimes \mathbf{g} +\omega^j_i\otimes \e{j}+\lambda_i\, \omega^i\otimes \e{5} ;\\
      \dif \e{5} &= -\lambda_i\, \omega^i\otimes \e{i};\\
      \dif \omega^i &= - \omega^i_j \wedge \omega^j ,\quad 	\omega^i_j+\omega^j_i=0 ;\\
      \dif \omega^5 &= 0 ,
    \end{aligned}
  \right.
\end{equation}
and the Gauss and Codazzi equations  \eqref{eq:G-C-R} become
\begin{equation}\label{eq:G-C_Hyper}
  \left\{
    \begin{aligned}
      \dif \omega^i_j &= - \omega^i_k \wedge \omega^k_j  + (1+\lambda_i\lambda_j) \,  \omega^i \wedge \omega^j;\\
      \dif \omega^5_i &= - \lambda_k\, \omega^i_k \wedge \omega^k.
    \end{aligned}
  \right.
\end{equation}

The \emph{$r$-mean curvatures} $H_r$ of an immersion $\mathbf{g}\!:M^4 \hookrightarrow \mathbb{S}^{5}$, with principal curvatures  $\lambda_{1}$, $\lambda_{2}$, $\lambda_{3}$ and $\lambda_{4}$, are given by
\begin{equation*}
H_r=\frac{1}{\binom{n}{k}} \sum_{i_1 <i_2 \cdots < i_r} \lambda_{i_1}\lambda_{i_2} \cdots \lambda_{i_r}, 
\end{equation*}
viz., 
\begin{equation}\label{eq:Hr}
  \left\{
    \begin{aligned}
      H_1 &= \frac14 (\lambda_1 + \lambda_2 + \lambda_3 + \lambda_4);\\
      H_2 &= \frac16 (\lambda_1\lambda_2 + \lambda_1\lambda_3 + \lambda_1\lambda_4 + \lambda_2\lambda_3 + \lambda_2\lambda_4 + \lambda_3\lambda_4);\\
      H_3 &= \frac14 (\lambda_1\lambda_2\lambda_3 + \lambda_1\lambda_2\lambda_4 + \lambda_1\lambda_3\lambda_4 + \lambda_2\lambda_3\lambda_4);\\
      H_4 &= \lambda_1\lambda_2\lambda_3\lambda_4.
    \end{aligned}
  \right.
\end{equation}

Note that $H_1$ and $H_4$ are, respectively, the \emph{mean curvature} and the \emph{Gauss-Kronecker curvature} of the hypersurface.

\subsection*{Immersed surfaces}
In the case of immersed surfaces in $\mathbb{S}^5$ ($n=2$, $p=3$), equations (\ref{eq:Curvature}) and (\ref{eq:Cartan}) yield that the Gaussian curvature of $M^2$ is given  by
\begin{equation}\label{eq:Gauss_Curvature}
\K= \Omega^1_2(\e1,\e2) = \dif \omega^1_2(\e1,\e2)=1+\sum_{\alpha=3}^5
\left |
\begin{array}{cc}
h^\alpha_{11} & h^\alpha_{12}\\
h^\alpha_{21} & h^\alpha_{22}
\end{array}
\right |.
\end{equation}
The \emph{scalar normal curvature} of the immersion $\mathbf{g}$,  $\KN$,  is the length of the normal curvature form,
\begin{equation}\label{eq:Normal_Curvature}
\KN=\sum_{i,j,\alpha,\beta} \left(R^{\alpha}_{\beta i j}\right)^2,\quad R^{\alpha}_{\beta i j}=\sum_k
\left |
\begin{array}{cc}
h^\alpha_{ki} & h^\alpha_{kj}\\
h^\beta_{ki} & h^\beta_{kj}
\end{array}
\right |.
\end{equation}  
Let $\SF$ be the square of the length of the second 
fundamental form,
\begin{equation*}
\SF=\sum \left( h^{\alpha}_{ij}\right)^2.
\end{equation*}
Note that if  $\mathbf{g}$ is minimal, then
\begin{equation*}
\K=1-\sum_{\alpha}\left[ \left( h^{\alpha}_{11}\right)^2+\left( h^{\alpha}_{12}\right)^2\right], \quad \KN=8\sum \left(h^{\alpha}_{11} h^{\beta}_{12}-h^{\alpha}_{12}h^{\beta}_{11}\right)^2
\end{equation*}
i.e,
\begin{equation}\label{eq:KG_KN}
K=1-\frac{1}{2}\SF , \quad  \KN=4\left[\left(R^{3}_{412}\right)^2+\left(R^{3}_{512}\right)^2 +\left(R^{4}_{512}\right)^2\right].
\end{equation}

 The {\it curvature ellipse} $\mathcal{E}_p$ of  $\mathbf{g}$ at $p$ is the image of the unitary circle by the second fundamental form $\B$ of  $\mathbf{g}$ at $p:$
 
\begin{equation*}
\mathcal{E}_p=\{\B_p(X,X) \in (T_pM)^{\perp} : X \in T_p M,\ \|X\|=1\}.
\end{equation*}
For $X=\cos \theta \, \e1 +\sin \theta \,\e2$, it is easy to see that

\begin{equation}\label{eq:elipse_curvatura}
\B_p(X,X)=\HH_p + 
\left[
\begin{array}{ccc}
\e3 & \e4 & \e5
\end{array}
\right]
\cdot
\left[
\begin{array}{cc}
\frac{h^3_{11}-h^3_{22}}{2} & h^3_{12}\\
\frac{h^4_{11}-h^4_{22}}{2} & h^4_{12}\\
\frac{h^5_{11}-h^5_{22}}{2} & h^5_{12}
\end{array}
\right]
\cdot
\left[
\begin{array}{cc}
\cos 2\theta\\
\sin 2\theta
\end{array}
\right].
\end{equation} 
If  $\mathbf{g}$ is minimal, then
\begin{equation}\label{eq:matrix_ellipse}
\B_p(X,X)=
\left[
\begin{array}{ccc}
\e3 & \e4 & \e5
\end{array}
\right]
\cdot
\left[
\begin{array}{cc}
h^3_{11} & h^3_{12}\\
h^4_{11} & h^4_{12}\\
h^5_{11} & h^5_{12}
\end{array}
\right]
\cdot
\left[
\begin{array}{cc}
\cos 2\theta\\
\sin 2\theta
\end{array}
\right].
\end{equation}
Observe that saying $\KN \neq 0$ is equivalent to say that the rank of the matrix $\left( h_{1j}^{\alpha} \right)_{3 \times 2}$ in \eqref{eq:matrix_ellipse} is two and $\B_p(T_pM)$ is two-dimensional.  

The map  $\mathbf{g}$ is {\it superminimal} if in addition, the curvature ellipse is always a circle. That is equivalent to 
\begin{equation}\label{eq:circulo_curvatura}
\|\B_{11}\|=\|\B_{12}\| ,\quad \langle \B_{11},\B_{12}\rangle=0, \ \ \B_{ij}:=\B(\e{i}, \e{j}).
\end{equation} 

\section{Example of a minimal hypersurface with vanishing Gauss-Kronecker curvature}

In this section, we shall give an example of a minimal hypersurface in $\mathbb{S}^5$ with vanishing Gauss-Kronecker  and 3-mean curvatures. This example is a 2-spherical local bundle over a minimal surface in $\mathbb{S}^5$.

Before to construct the example, we give a result concerning an immersed superminimal surface with vanishing scalar normal curvature.   

\begin{proposition}\label{Proposition1}
Let $M^2$ be a complete surface and $\mathbf{g}\! : M^2\hookrightarrow \mathbb{S}^5$ be a superminimal immersion. If the scalar normal curvature $\KN$ vanishes, then $M^2$ is compact and $\mathbf{g}(M^2)$ is a totally geodesic sphere in $\mathbb{S}^5$. 
\end{proposition}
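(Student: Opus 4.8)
The plan is to first turn the two hypotheses into pointwise algebraic conditions on the normal vectors $\B_{11}=\sum_\alpha h^\alpha_{11}\,\e\alpha$ and $\B_{12}=\sum_\alpha h^\alpha_{12}\,\e\alpha$, and to deduce from them that the second fundamental form vanishes identically. By the observation following \eqref{eq:matrix_ellipse}, the hypothesis $\KN=0$ is equivalent to the rank of $\left(h^\alpha_{1j}\right)_{3\times 2}$ being at most one; that is, $\B_{11}$ and $\B_{12}$ are linearly dependent at each point. On the other hand, superminimality means, via \eqref{eq:circulo_curvatura}, that $\langle\B_{11},\B_{12}\rangle=0$ and $\|\B_{11}\|=\|\B_{12}\|$.

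The heart of the argument is the elementary remark that two vectors that are simultaneously orthogonal, linearly dependent, and of equal length must both be zero: if both were nonzero, linear dependence would give $\B_{12}=c\,\B_{11}$ with $c\neq0$, contradicting orthogonality; so one of them vanishes, and equal length forces the other to vanish too. Hence $h^\alpha_{11}=h^\alpha_{12}=0$ for every $\alpha$, and minimality ($h^\alpha_{22}=-h^\alpha_{11}$) yields $h^\alpha_{22}=0$ as well. Therefore $\B\equiv0$, so $\mathbf{g}$ is totally geodesic and $\SF\equiv0$; by \eqref{eq:KG_KN} this gives $\K\equiv1$.

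Next I would globalize. Since $\B\equiv0$, equation \eqref{eq:Cartan} gives $\omega^\alpha_i=0$, so the structure equations \eqref{eq:structure} reduce to $\dif\mathbf{g}=\omega^i\otimes\e i$ and $\dif\e i=-\omega^i\otimes\mathbf{g}+\omega^j_i\otimes\e j$. Reading these in $\mathbb{R}^6$, the $3$-plane $V=\mathrm{span}\{\mathbf{g},\e1,\e2\}$ is parallel along $M^2$ (its defining frame differentiates back into itself), hence constant because $M^2$ is connected. Consequently $\mathbf{g}(M^2)\subset\mathbb{S}^5\cap V$, a totally geodesic great sphere $\mathbb{S}^2\subset\mathbb{S}^5$, and $\mathbf{g}\colon M^2\to\mathbb{S}^2$ is a totally geodesic isometric immersion between surfaces of equal dimension, i.e.\ a local isometry.

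Finally, because $M^2$ is complete, a local isometry onto the (complete) round sphere $\mathbb{S}^2$ is a Riemannian covering map; since $\mathbb{S}^2$ is simply connected, this covering is trivial and $\mathbf{g}$ is a diffeomorphism onto $\mathbb{S}^2$. Thus $M^2$ is compact and $\mathbf{g}(M^2)$ is a totally geodesic sphere in $\mathbb{S}^5$. The step I expect to require the most care is the globalization: showing that the locally defined totally geodesic sphere is in fact a single fixed great $\mathbb{S}^2$ (the parallelism argument above), and then correctly invoking completeness to pass from a local isometry to a covering map; by contrast the pointwise linear-algebra step, though it is the crux of the matter, is immediate once both hypotheses are expressed through $\B_{11}$ and $\B_{12}$.
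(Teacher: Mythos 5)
Your proof is correct, and its pointwise core is exactly the paper's: $\KN=0$ forces $\B_{11},\B_{12}$ to be linearly dependent, superminimality forces them to be orthogonal and of equal length, hence both vanish, so $\B\equiv0$ and $\K\equiv1$. (The paper states the step ``orthogonal and linearly dependent, thus $\B\equiv0$'' more tersely; you correctly spell out that equal length is needed to kill the second vector, since orthogonality plus dependence alone only annihilates one of them.) Where you genuinely diverge is the globalization. The paper, having $\K\equiv1$ and $M^2$ complete, invokes Bonnet--Myers to get compactness and then simply cites $\B\equiv0$ to conclude that the image is a totally geodesic sphere. You instead run a reduction-of-codimension argument: with $\omega^\alpha_i=0$ the structure equations show that the $3$-plane $\mathbb{R}\mathbf{g}(p)\oplus\mathbf{g}_*(T_pM)$ is constant, so the image lies in a fixed great $\mathbb{S}^2$, and completeness then makes the local isometry $\mathbf{g}\colon M^2\to\mathbb{S}^2$ a Riemannian covering, hence a diffeomorphism because $\mathbb{S}^2$ is simply connected. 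Your route is longer but more self-contained and yields a slightly stronger conclusion ($\mathbf{g}$ is a global diffeomorphism onto a great sphere, with compactness falling out for free), whereas the paper's Bonnet--Myers shortcut gets compactness in one line but leaves the identification of the image as a round great sphere to standard facts about complete totally geodesic submanifolds. One further difference: the paper's proof also establishes the identity $(\K-1)^2-\tfrac14\KN=\left(\|\B_{11}\|^2-\|\B_{12}\|^2\right)^2+4\langle \B_{11},\B_{12}\rangle^2$, which is the characterization of superminimality advertised in the introduction; your argument bypasses it, which is fine for the proposition itself but means that side result would need a separate proof.
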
  
\begin{proof}
It is easy to see that
\begin{equation*}\label{eq:superminima}
\begin{aligned}
(\K-1)^2-\frac14 \KN&=\left(\|\B_{11}\|^2+\|\B_{12}\|^2\right)^2-
4\sum_{\alpha<\beta}
\left |
\begin{array}{cc}
h^\alpha_{11} & h^\alpha_{12}\\
h^\beta_{11} & h^\beta_{12}
\end{array}
\right |
\\
&=\left(\|\B_{11}\|^2+\|\B_{12}\|^2\right)^2-4\left( \|\B_{11}\|^2\|\B_{12}\|^2- \langle \B_{11} , \B_{12}\rangle^2\right)\\
&=\left(\|\B_{11}\|^2-\|\B_{12}\|^2\right)^2+4\langle \B_{11} , \B_{12}\rangle^2.
\end{aligned}
\end{equation*}
Therefore, it follows from (\ref{eq:circulo_curvatura}) that  $\mathbf{g}$ is superminimal if and only if 
\begin{equation}
(\K-1)^2-\frac{1}{4}\KN=0.
\end{equation}

Assume now that $\mathbf{g}\! : M^2\hookrightarrow \mathbb{S}^5$ is superminimal and that $\KN=0.$ So, the vectors $\B_{11}$ and $\B_{12}$ are orthogonal and linearly dependent, thus $\B \equiv 0$ and $\K=1$. Therefore, if $M^2$ is complete, by Bonnet-Myers' Theorem, $M^2$ is compact and, since $\B \equiv 0$,    $\mathbf{g}(M^2)$ is a totally geodesic sphere in $\mathbb{S}^5$.
\end{proof}

Now, starting the construction of  the example, let $\mathbf{g}\! : M^2\hookrightarrow \mathbb{S}^5$ be an isometric immersion and  consider  $\mathcal{N}\subset(\mathbf{g}_*(TM))^\bot$  the unit normal bundle of the immersion  $\mathbf{g}$. Then
\begin{equation*}
\mathcal{N}=\{(p,V) \in M^2 \times \mathbb{R}^6 : \|V\|=1,\ V \perp \mathbb{R}\cdot \mathbf{g}(p) \oplus \mathbf{g}_*(T_pM)\}.
\end{equation*}
Denote the projection to the first factor by $\pi_1\!:\mathcal{N} \rightarrow M^2$ and the projection to the second factor by the map $\mathbf{x}_\mathbf{g}:\mathcal{N} \rightarrow \mathbb{S}^5$. 

Consider 
\begin{equation}\label{eq:N*}
\mathcal{N}_*(p)=\mathcal{N}(p) \setminus \left \{ \B_p(T_pM)^{\perp} \cap \mathbf{g}_*(T_pM)^{\perp} \right\}.
\end{equation}
We have three situations :
\begin{enumerate}
	\item[(a)] if $\KN \neq 0,$ i.e., if rank of the matrix $\left( h_{1j}^{\alpha} \right)_{3 \times 2}$ is two, then $\mathcal{N}_*(p)$ is a $2$-sphere without two antipodal points, given by the orthogonal complement of  $\B_p(T_pM)$ in $\mathbf{g}_*(T_pM)^{\perp};$
	\item[(b)] if rank of the matrix $\left( h_{1j}^{\alpha} \right)_{3 \times 2}$ is one, then $\mathcal{N}_*(p)$ is a $2$-sphere without a great circle;
	\item[(c)] if rank of the matrix $\left( h_{1j}^{\alpha} \right)_{3 \times 2}$ is zero, that is, if $\mathbf{g}(M)$ is totally geodesic, then $\mathcal{N}_*(p)=\emptyset.$ 
\end{enumerate}

\begin{figure}[ht]
	\centering
	\parbox[][2.5cm][t]{0.12\textwidth}{\includegraphics[width=0.12\textwidth]{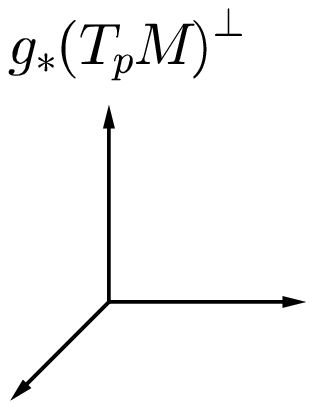}}\ \ 
\subfigure[$\rank(h^\alpha_{1j})=2$]{\parbox[c][2.5cm][t]{0.25\textwidth}{\includegraphics[width=0.25\textwidth]{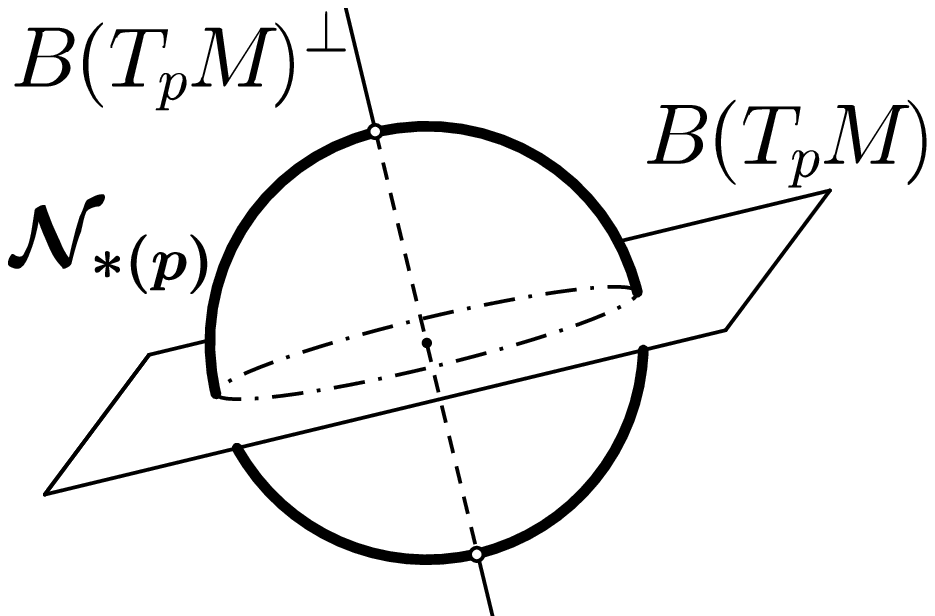}}}\ \ \ \ \ 
\subfigure[$\rank(h^\alpha_{1j})=1$]{\parbox[c][2.5cm][t]{0.24\textwidth}{\includegraphics[width=0.24\textwidth]{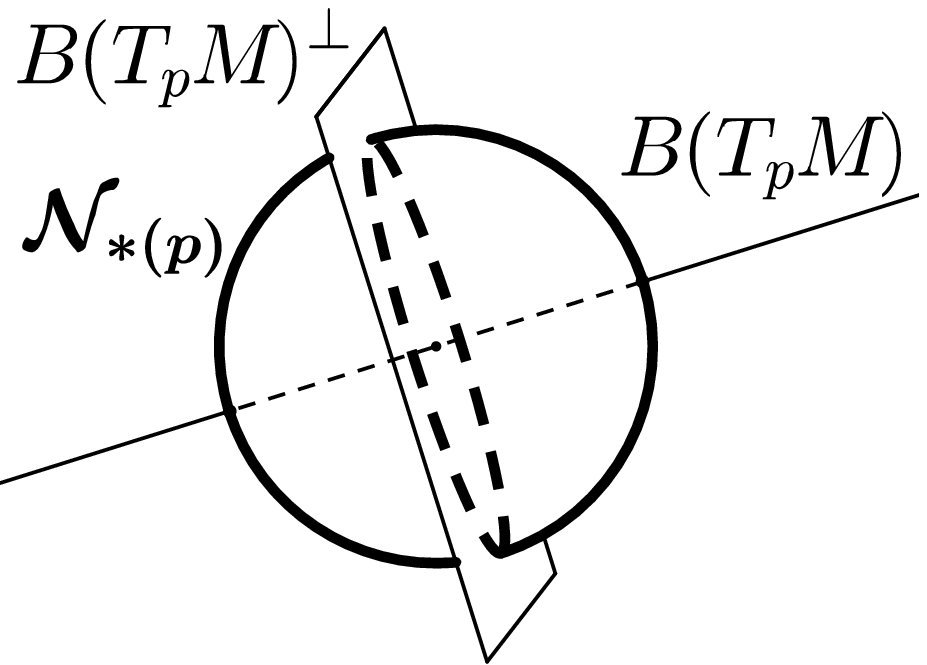}}}\ \ \ \ \  
\subfigure[$\rank(h^\alpha_{1j})=0$]{\parbox[c][2.5cm][t]{0.24\textwidth}{\includegraphics[width=0.24\textwidth]{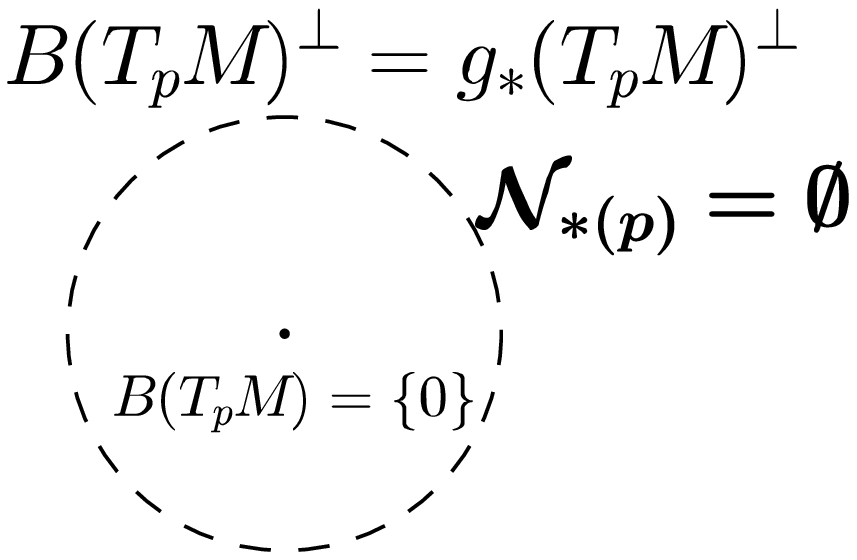}}}
\caption{The three cases of $\mathcal{N}_*(p)$ in terms of matrix $\left( h_{1j}^{\alpha} \right)_{3 \times 2}$}
\label{fig:casos}
\end{figure}

\begin{theorem}\label{Teorema1}
Let $\mathbf{g}\! : M^2\hookrightarrow \mathbb{S}^5$ be a minimal immersion with  nowhere zero second fundamental form. Then there exists an open set $\mathcal{N}_*$ of $\mathcal{N}$ such that $\mathbf{x}_\mathbf{g}:\mathcal{N}_* \rightarrow \mathbb{S}^5$ is an immersion of the $4$-dimensional manifold $\mathcal{N}_*$. Furthermore, $\mathbf{x}_\mathbf{g}:\mathcal{N}_* \rightarrow \mathbb{S}^5$ is an immersed minimal hypersurface of $\mathbb{S}^5$ with zero Gauss-Kronecker curvature and zero $3$-mean curvature.
\end{theorem}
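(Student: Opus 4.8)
The plan is to view $\mathbf{x}_\mathbf{g}$ as a polar map and to reduce everything to the shape operator $A_V$ of $\mathbf{g}$ in the normal direction $V$. I would fix $(p,V)\in\mathcal{N}$, write $V=v^\alpha\e\alpha$ with $\sum_\alpha(v^\alpha)^2=1$, and use the normal connection to split $T_{(p,V)}\mathcal{N}$ into the horizontal lift of $T_pM$ and the vertical space $N_p\cap V^\perp$ tangent to the fibre $2$-sphere, where $N_p=\mathbf{g}_*(T_pM)^\perp$. Differentiating $\mathbf{x}_\mathbf{g}(p,V)=V$ with the structure equations \eqref{eq:structure} (so that $\dif\e\alpha=\omega^j_\alpha\otimes\e{j}+\omega^\beta_\alpha\otimes\e\beta$ and $\omega^\alpha_i=h^\alpha_{ij}\omega^j$ on $M$), I expect to find that a vertical vector is mapped to itself in $N_p\cap V^\perp$, while the horizontal lift of $\e{i}$ is sent, by the Weingarten formula, to $-A_V(\e{i})\in\mathbf{g}_*(T_pM)$, where $(A_V)_{ik}=\sum_\alpha v^\alpha h^\alpha_{ik}=\langle\B_{ik},V\rangle$; the normal-connection term drops out along horizontal lifts.

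Since $\mathbf{g}$ is minimal, $\B_{22}=-\B_{11}$, so $A_V$ is symmetric and traceless with eigenvalues $\pm\mu$, $\mu=\sqrt{\langle\B_{11},V\rangle^2+\langle\B_{12},V\rangle^2}$; hence $A_V=0$ precisely when $V\perp\mathrm{span}\{\B_{11},\B_{12}\}=\B_p(T_pM)$, which is exactly the fibrewise set deleted in \eqref{eq:N*}. Thus $\mathcal{N}_*=\{(p,V)\in\mathcal{N}:A_V\neq0\}$ is open, and it meets every fibre because $\B$ never vanishes, so $\mathcal{N}_*$ is an open $4$-manifold. On $\mathcal{N}_*$ the operator $A_V$ is invertible, so a horizontal-plus-vertical vector in $\ker\dif\mathbf{x}_\mathbf{g}$ must have vanishing horizontal part (its tangential image $-A_V(\cdot)$ is injective) and then vanishing vertical part; therefore $\dif\mathbf{x}_\mathbf{g}$ is injective and $\mathbf{x}_\mathbf{g}\colon\mathcal{N}_*\to\mathbb{S}^5$ is an immersion, locally a hypersurface since $\dim\mathcal{N}_*=4$.

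To obtain the curvatures I would use the polar duality $\eta:=\mathbf{g}\circ\pi_1$. One checks that $\mathbf{g}(p)$ lies in $T_V\mathbb{S}^5=V^\perp$ and is orthogonal to the image of $\dif\mathbf{x}_\mathbf{g}$, since that image lies in $\mathbf{g}_*(T_pM)\oplus N_p$ and both summands are orthogonal to $\mathbf{g}(p)$; hence $\eta$ is a unit normal field of the hypersurface. Because $\dif\eta=\dif\mathbf{g}=\omega^i\otimes\e{i}$ kills vertical vectors and sends the horizontal lift of $\e{i}$ to $\e{i}$, the shape operator $A$, determined by $A\,\dif\mathbf{x}_\mathbf{g}(Z)=-\dif\eta(Z)$, annihilates $N_p\cap V^\perp$ and satisfies $A(A_V\e{i})=\e{i}$ on $\mathbf{g}_*(T_pM)$. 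Consequently $A$ has two zero principal curvatures coming from the fibre directions and the two eigenvalues $\pm1/\mu$ of $A_V^{-1}$ on $\mathbf{g}_*(T_pM)$.

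Feeding the principal curvatures $\{1/\mu,-1/\mu,0,0\}$ into \eqref{eq:Hr} yields $H_1=0$, $H_3=0$ and $H_4=0$, which is minimality together with vanishing $3$-mean and Gauss-Kronecker curvatures. I expect the main obstacle to be the bookkeeping in the first and third steps: isolating the tangential/normal splitting of $\dif\mathbf{x}_\mathbf{g}$ from \eqref{eq:structure} and checking that the fibre directions $N_p\cap V^\perp$ are exactly the kernel of $A$, so that $A$ restricts to $A_V^{-1}$ on $\mathbf{g}_*(T_pM)$ and the eigenvalue count is forced.
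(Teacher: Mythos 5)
Your proposal is correct, and its conclusions coincide with the paper's, but the execution is genuinely different. The shared skeleton is the polar-map strategy: the paper's matrix $A=(a_{ij})$, with $a_{ij}=\sin\varphi\cos\theta\,h^3_{ij}+\sin\varphi\sin\theta\,h^4_{ij}+\cos\varphi\,h^5_{ij}$, is exactly your shape operator $A_V$ evaluated at $V=\mathbf{x}(p,\theta,\varphi)$, and the paper also takes $\mathbf{g}\circ\pi_1$ as the unit normal of the hypersurface. The difference lies in how the differential of $\mathbf{x}_\mathbf{g}$ is controlled. The paper parametrizes the fibre by spherical coordinates $(\theta,\varphi)$, computes $\dif\mathbf{x}$ and the induced metric in full (equations \eqref{eq:dif_x} and \eqref{eq:1form_fund2}), and then must verify $\det C\neq 0$ on $\mathcal{N}_*$ by a two-case analysis on $\rank\bigl(h^\alpha_{1j}\bigr)$, using adapted frames built by Gram--Schmidt, followed by a separate rank argument in the $(\theta,\varphi)$ variables to kill the vertical part of a kernel vector. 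You instead split $T\mathcal{N}$ invariantly into the normal-connection horizontal space and the fibre tangent space $N_p\cap V^\perp$, observe that $\dif\mathbf{x}_\mathbf{g}$ acts as $-A_V$ on the former and as the identity on the latter, and get injectivity on $\mathcal{N}_*$ from a single linear-algebra fact: a nonzero traceless symmetric $2\times 2$ operator is invertible, and $A_V=0$ happens exactly on the set deleted in \eqref{eq:N*}; orthogonality of $\mathbf{g}_*(T_pM)$ and $N_p$ then forces both components of a kernel vector to vanish. This removes the paper's case analysis and coordinate bookkeeping entirely. At the curvature step the paper computes $\mathrm{II}_{\mathbf{x_g}}$ and argues through its trace and rank, whereas you identify the whole shape operator of the hypersurface ($A_V^{-1}$ on horizontal images, $0$ on vertical directions), so the principal curvatures $\{1/\mu,-1/\mu,0,0\}$ and hence $H_1=H_3=H_4=0$ are read off from \eqref{eq:Hr} at once. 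What the paper's computation buys is the explicit coordinate formula \eqref{eq:1form_fund2} for the induced metric on $\mathcal{N}_*$; what yours buys is brevity, coordinate independence, and the explicit spectrum of the hypersurface's shape operator, which makes all three curvature claims immediate.
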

\begin{proof}
Let $\{\e{1}, \e{2}, \e{3}, \e{4}, \e{5}\}$ be a frame defined on an open set $U \subset M^2$ adapted to the immersion  $\mathbf{g}$ and let $W$ be a coordinate neighborhood of $\mathbb{S}^2$. Parameterize $\pi_1^{-1}(U)$ locally by $U \times W$ via the map $\mathbf{y}\!:U \times W \rightarrow \mathcal{N}$ given by
\begin{equation*}
(p, \theta, \varphi)\mapsto(p, \sin \varphi \cos \theta \, \e{3} + \sin \varphi \sin \theta \, \e{4} + \cos \varphi \, \e{5}), \ 0\leq\theta<2\pi, \ 0< \varphi < \pi. 
\end{equation*}
Then $\mathbf{x}\!: U \times W \rightarrow \mathbb{S}^5$ given by 
\begin{equation}\label{xg}
\mathbf{x}(p,\theta,\varphi)=\sin \varphi \cos \theta \, \e{3} + \sin \varphi \sin \theta \, \e{4} + \cos \varphi \, \e{5}
\end{equation}
is a local representation of $\mathbf{x_g}.$ Thus,
\begin{equation*}
\begin{aligned}
\dif \mathbf{x} &=\sin \varphi \cos \theta \, \dif \e{3} + \sin \varphi \sin \theta \, \dif \e{4} + \cos \varphi \, \dif \e{5}+\\
&+\cos \varphi \cos \theta \, \dif \varphi \, \e{3} + \cos \varphi \sin \theta \, \dif \varphi \, \e{4} - \sin \varphi \, \dif \varphi \, \e{5}-\\ 
&-\sin \varphi \sin \theta \, \dif \theta \, \e{3}  + \sin \varphi \cos \theta \, \dif \theta \, \e{4}
\end{aligned}
\end{equation*}
The structure equations adapted to  $\mathbf{g}$ then yield
\begin{equation}\label{eq:dif_x}
\begin{aligned}
\dif \mathbf{x} &=\left(\sin \varphi \cos \theta \ \omega^1_3 + \sin \varphi \sin \theta \ \omega^1_4 + \cos \varphi \ \omega^1_5\right)\otimes \e{1}+\\
&+\left(\sin \varphi \cos \theta \ \omega^2_3 + \sin \varphi \sin \theta \ \omega^2_4 + \cos \varphi \ \omega^2_5\right)\otimes \e{2}+\\
&+\left(-\sin \varphi \sin \theta \ \dif \theta + \cos \varphi \cos \theta \ \dif \varphi + \cos \varphi \ \omega^3_5-\sin \varphi \sin \theta \ \omega^4_3\right)\otimes \e{3}+\\
&+\left(\sin \varphi \cos \theta \ \dif \theta + \cos \varphi \sin \theta \ \dif \varphi + \cos \varphi \ \omega^4_5+\sin \varphi \cos \theta \ \omega^4_3\right)\otimes \e{4}+\\
&+\left(-\sin \varphi \ \dif \varphi + \sin \varphi \cos \theta \ \omega^5_3+\sin \varphi \sin \theta \ \omega^5_4\right)\otimes \e{5}.
\end{aligned}
\end{equation}
From \eqref{eq:Cartan} and \eqref{eq:dif_x}, we obtain the first fundamental form of $\mathbf{x}$ 
\begin{equation*}\label{eq:1form_fund}
\begin{aligned}
\dif s^2_{\mathbf{x}}=
\langle \dif \mathbf{x}, \dif \mathbf{x}\rangle &=
\left[
\begin{array}{cc}
\omega^1 & \omega^2
\end{array}
\right]
\otimes
\left[
\begin{array}{cc}
c_{11} & c_{12}\\
c_{21} & c_{22}
\end{array}
\right]
\cdot
\left[
\begin{array}{cc}
\omega^1\\
\omega^2
\end{array}
\right]+\\
&+\left(-\sin \varphi \sin \theta \ \dif \theta + \cos \varphi \cos \theta \ \dif \varphi + \cos \varphi \ \omega^3_5-\sin \varphi \sin \theta \ \omega^4_3\right)^2+\\
&+\left(\sin \varphi \cos \theta \ \dif \theta + \cos \varphi \sin \theta \ \dif \varphi + \cos \varphi \ \omega^4_5+\sin \varphi \cos \theta \ \omega^4_3\right)^2+\\
&+\left(-\sin \varphi \ \dif \varphi + \sin \varphi \cos \theta \ \omega^5_3+\sin \varphi \sin \theta \ \omega^5_4\right)^2,
\end{aligned}
\end{equation*}
where 
\begin{equation*}
c_{ij}=\sum_k a_{ik} a_{kj}, \quad a_{ij}=\sin \varphi \cos \theta \, h^3_{ij}  + \sin \varphi \sin \theta \, h^4_{ij} + \cos \varphi \, h^5_{ij}.
\end{equation*}
Since $\mathbf{g}$ is minimal then 
\begin{equation*}
C=A^2=\left[
\begin{array}{cc}
a_{11}^2+a_{12}^2 & 0\\
0 & a_{11}^2+a_{12}^2
\end{array}
\right] = 
\left[
\begin{array}{cc}
-\det A & 0\\
0 & -\det A
\end{array}
\right] .
\end{equation*}
Thus
\begin{equation}\label{eq:1form_fund2}
\begin{aligned}
\dif s^2_{\mathbf{x}}&=
-\det A \left( (\omega^1)^2+(\omega^2)^2\right)+\\
&+\left(-\sin \varphi \sin \theta \ \dif \theta + \cos \varphi \cos \theta \ \dif \varphi + \cos \varphi \ \omega^3_5-\sin \varphi \sin \theta \ \omega^4_3\right)^2+\\
&+\left(\sin \varphi \cos \theta \ \dif \theta + \cos \varphi \sin \theta \ \dif \varphi + \cos \varphi \ \omega^4_5+\sin \varphi \cos \theta \ \omega^4_3\right)^2+\\
&+\left(-\sin \varphi \ \dif \varphi + \sin \varphi \cos \theta \ \omega^5_3+\sin \varphi \sin \theta \ \omega^5_4\right)^2,
\end{aligned}
\end{equation}
the quadratic form $-\det A \left( (\omega^1)^2+(\omega^2)^2\right)$ is positive and is positive-definite if and only if $\det C\neq0$. Since
\begin{equation*}
\begin{aligned}
\det C &= \left[\left(\sin \varphi \cos \theta \, h^3_{11}  + \sin \varphi \sin \theta \, h^4_{11} + \cos \varphi \, h^5_{11}\right)^2\right.+\\
&+\left.\left(\sin \varphi \cos \theta \, h^3_{12}  + \sin \varphi \sin \theta \, h^4_{12} + \cos \varphi \, h^5_{12}\right)^2\right]^2,
\end{aligned}
\end{equation*}
we get

\begin{equation}\label{eq:system1}
\det C = 0 \quad \Longleftrightarrow \quad 
\left[
\begin{array}{ccc}
h^3_{11} & h^4_{11} & h^5_{11}\\
h^3_{12} & h^4_{12} & h^5_{12}
\end{array}
\right]
\cdot
\left[
\begin{array}{c}
\sin \varphi \cos \theta\\
\sin \varphi \sin \theta\\
\cos \varphi
\end{array}
\right]
=
\left[
\begin{array}{c}
0\\
0
\end{array}
\right].
\end{equation}

From the hypothesis that the second fundamental form is nonzero, there are two possibilities at each point $p$\,: the rank of $\left( h_{1j}^{\alpha} \right)_{3 \times 2}$ is either two or one. We claim that in both cases $\det C\neq 0$ in all point $(p,\theta,\varphi)\in \mathcal{N}_*(p)$. 

\begin{itemize}
	\item[(i)] Suppose ${\rank(\left( h_{1j}^{\alpha} \right)_{3 \times 2})=2}$. In this case, we may choose a particular orthonormal local frame $\{\e1, \e2,\e3, \e4, \e5\}$ such that the matrix $\left( h_{1j}^{\alpha} \right)_{3 \times 2}$ takes the form
 \begin{equation*}
	\left[
	\begin{array}{cc}
		\alpha &\beta\\
		0&\gamma\\
		0&0
	\end{array}
	\right],
\end{equation*}
with $\alpha\gamma\neq0$. In fact, starting from $\{\e1, \e2, \e3, \e4, \e5\}$, an orthonormal frame,  with generic matrix $\left( h_{1j}^{\alpha} \right)$, let 
\begin{align*}
	{\te3}&=h^3_{11}\e3+h^4_{11}\e4+h^5_{11}\e5\\
	{\te4}&=h^3_{12}\e3+h^4_{12}\e4+h^5_{12}\e5.
\end{align*}
One may apply the Gram Schmidt process on these two vectors to obtain the vectors $\{\tte3, \tte4\}$. Then we chose $\te5$, such that $\{\e1, \e2, \tte3, \tte4, \te5\}$ be orthonormal. This is the  required basis. It is easy to verify that $\alpha = \| \te3\|$, $\alpha\beta = \langle \te3, \te4\rangle$ and $\beta^2+\gamma^2 = \| \te4\|^2$.  

For a such particular frame, the equivalence \eqref{eq:system1} reduces to
\begin{equation*}
	\det C = 0 \Longleftrightarrow \left\{
	\begin{array}{rc}
	\alpha \sin\varphi\cos \theta &= 0\\
	\beta \sin\varphi\cos \theta + \gamma \sin\varphi\sin \theta &= 0
	\end{array}
	\right. .
\end{equation*}
Since $\alpha\gamma\neq0$ and $0<\varphi<\pi$, then $\det C\neq0$ for all point $(p, \theta, \varphi)\in \mathcal{N}_*(p)$, given in \eqref{eq:N*}.
 
\item[(ii)] Suppose ${\rank(\left( h_{1j}^{\alpha} \right)_{3 \times 2})=1}$. We may choose a particular orthonormal local frame $\{\e1, \e2,\e3, \e4, \e5\}$ such that the matrix $\left( h_{1j}^{\alpha}(p) \right)_{3 \times 2}$, at point $p$, takes the form
 \begin{equation*}
	\left[
	\begin{array}{cc}
		\alpha &\beta\\
		0&0\\
		0&0
	\end{array}
	\right],
\end{equation*}
with $\alpha^2+\beta^2\neq0$. In this case, the system \eqref{eq:system1} takes the form
\begin{equation*}
	\det C = 0 \Longleftrightarrow \left\{
	\begin{array}{rc}
	\alpha \sin\varphi\cos \theta &= 0\\
	\beta \sin\varphi\cos \theta &= 0
	\end{array}
	\right. .
\end{equation*}
Since $\alpha^2+\beta^2\neq0$  and $0<\varphi<\pi$, then $\det C\neq0$ if and only if $\theta$ is different from $\frac\pi 2$ and $\frac{3\pi}{2}$. Then $\det C\neq0$  for all point $(p, \theta, \varphi)\in \mathcal{N}_*(p)$, given in \eqref{eq:N*}.
\end{itemize}

Let 
\begin{equation*}
\mathbf{w}= a \, \e{1} + b \, \e{2} + c \, \frac{\partial}{\partial \theta}+ d\, \frac{\partial}{\partial \varphi}, \quad a, \, b, \, c, \, d \, \in \mathbb{R}
\end{equation*}
be a vector in $T_pU \times T_p W$. The 1-forms $\omega^{\alpha}_{\beta}$, are the connection forms of the normal bundle of $\mathbf{g}$ and $\omega^{\alpha}_{\beta}(\frac{\partial}{\partial \theta})$,  $\omega^{\alpha}_{\beta}(\frac{\partial}{\partial \varphi})$ are zero. Suppose now 
\begin{equation*}
\dif \mathbf{x}_{(p,\theta,\varphi)} (\mathbf{w})=0.
\end{equation*}
From equation \eqref{eq:1form_fund2} and from the facts that $\dif s^2_{\mathbf{x}}(\mathbf{w})=0$ and $\det C\neq0$, we get $a=b=0$ and $c$ and $d$ must satisfy 
\begin{equation*}\label{eq:system2}
\left[
\begin{array}{cc}
-\sin \varphi \, \sin \theta  & \cos \varphi \, \cos \theta\\
\sin \varphi \, \cos \theta  & \cos \varphi \, \sin \theta\\
0 & -\sin \varphi
\end{array}
\right]
\left[
\begin{array}{c}
c\\
d
\end{array}
\right]
=
\left[
\begin{array}{c}
0\\
0\\
0
\end{array}
\right].
\end{equation*}
Since $0 < \varphi < \pi$, the first matrix on the equation above has rank 2 and necessarily $c=d=0$. So,
\begin{equation*}
\dif \mathbf{x}_{(p,\theta,\varphi)} (\mathbf{w})=0 \quad  \Rightarrow \ \mathbf{w}=0.
\end{equation*} 
Therefore, $\mathbf{x_g}$ is an immersion.

Let $\left\{\e{1}, \e{2}, \frac{\partial}{\partial \theta}, \frac{\partial}{\partial \varphi}, \mathbf{g} \circ \pi_1 \right\}$ an orthogonal frame adapted to the immersion $\mathbf{x_g}$. Note that $\mathbf{g} \circ \pi_1(p, \theta, \varphi)$ is normal to $T_{\mathbf{x}(p, \theta, \varphi)}\mathcal{N_*}$ and tangent to $\mathbb{S}^5.$ The second fundamental form of $\mathbf{x_g}$ is given by
\begin{equation*}
\begin{aligned}
\mathrm{II}_{\mathbf{x_g}}&=-\langle \dif \mathbf{x}, \dif(\mathbf{g} \circ \pi_1) \rangle\\
&=-\langle \dif \mathbf{x}, \omega^1 \otimes \e{1}+\omega^2 \otimes \e{2} \rangle\\
&=a_{11} \, \omega^1 \otimes \omega^1+2\, a_{12} \, \omega^1 \otimes \omega^2+a_{22} \, \omega^2 \otimes \omega^2.
\end{aligned}
\end{equation*}
Since $\mathbf{g}$ is minimal, then the trace of matrix $A=(a_{ij})$ is zero. Thus, trace of matrix of the $\mathrm{II}_{\mathbf{x_g}}$ is zero too. Therefore $\mathbf{x}_\mathbf{g}\!:\mathcal{N}_* \rightarrow \mathbb{S}^5$ is a minimal immersion. Since the rank of matrix $A$ is two, then $3$ and $4$-mean curvatures vanish. This proves the theorem.  
\end{proof}


\section{Minimal hypersurfaces in $\mathbb{S}^5$ with zero Gauss-Kronecker and $3$-mean curvatures}

In this section we shall study locally the complete minimal hypersurfaces in $\mathbb{S}^5$ with zero Gauss-Kronecker curvature, zero $3$-mean curvature and nowhere zero second fundamental form.  First we need the following lemmas.

\begin{lemma}\label{lemma1}
Let $\mathbf{g}\! :M^4 \rightarrow \mathbb{S}^5$ be an immersed hypersurface of $\mathbb{S}^5$ and let \linebreak $\{\e1,\e2,\e3,\e{4}, \e{5}\}$  be an orthonormal (local) frame adapted to $\mathbf{g}$ with dual co-frame  $\{\omega^1, \omega^2,\omega^3, \omega^4, \omega^5\}$. If $\{ \e1, \e2, {\te{3}}, {\te{4}}, \e{5}\}$ is other orthonormal frame with dual co-frame  $\{\omega^1, \omega^2,\widetilde{\omega}^3, \widetilde{\omega}^4, \omega^5\}$ such that 
\begin{equation*}
\left[
\begin{array}{c}
{\te{3}}\\
{\te{4}}
\end{array}
\right]
=
\left[
\begin{array}{cr}
\cos \theta & -\sin \theta\\
\sin \theta & \cos \theta
\end{array}
\right]
\left[
\begin{array}{cc}
\e{3}\\
\e{4}
\end{array}
\right].
\end{equation*}
Then 
\begin{equation*}
\left[
\begin{array}{c}
\widetilde{\omega}^3\\
\widetilde{\omega}^4
\end{array}
\right]
=
\left[
\begin{array}{cr}
\cos \theta & -\sin \theta\\
\sin \theta & \cos \theta
\end{array}
\right]
\left[
\begin{array}{cc}
\omega^3\\
\omega^4
\end{array}
\right];
\end{equation*}

\begin{equation*}
\left[
\begin{array}{cc}
\widetilde{\omega}^1_3 & \widetilde{\omega}^2_3\\
\widetilde{\omega}^1_4 & \widetilde{\omega}^2_4
\end{array}
\right]
=
\left[
\begin{array}{cr}
\cos \theta & -\sin \theta\\
\sin \theta & \cos \theta
\end{array}
\right]
\left[
\begin{array}{cc}
\omega^1_3 & \omega^2_3\\
\omega^1_4 & \omega^2_4
\end{array}
\right];
\end{equation*}
\begin{equation*}
\widetilde{\omega}^1_2=\omega^1_2, \quad \widetilde{\omega}^3_4=\omega^3_4+\dif \theta, \quad \widetilde{\omega}^5_i=\omega^5_i.
\end{equation*}

\end{lemma}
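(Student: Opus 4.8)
The plan is to compute every tilded form directly from its defining inner product, exploiting that the two frames differ only by an $\mathrm{SO}(2)$-rotation in the $\e3$-$\e4$ plane while $\mathbf{g}$, $\e1$, $\e2$, $\e5$ are left untouched. I would first record that, because $\{\mathbf{g}, \e1, \e2, \e3, \e4, \e5\}$ is an orthonormal basis of $\mathbb{R}^6$ along the immersion, the coframe and connection forms are recovered as $\omega^A = \langle \dif\mathbf{g}, \e{A}\rangle$ and $\omega^B_A = \langle \dif\e{A}, \e{B}\rangle$, and the tilded forms are the very same pairings taken with the tilded frame. From the hypothesis, $\te3 = \cos\theta\,\e3 - \sin\theta\,\e4$ and $\te4 = \sin\theta\,\e3 + \cos\theta\,\e4$; pairing these with $\dif\mathbf{g}$ and using that $\langle \dif\mathbf{g}, \cdot\rangle$ is linear over functions yields the rotation $\widetilde\omega^3 = \cos\theta\,\omega^3 - \sin\theta\,\omega^4$ and $\widetilde\omega^4 = \sin\theta\,\omega^3 + \cos\theta\,\omega^4$ at once.

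The organizing step is to differentiate the new frame vectors, being careful to retain the $\dif\theta$ terms since $\theta$ is a function. Recognizing $\cos\theta\,\e3 - \sin\theta\,\e4 = \te3$ and $\sin\theta\,\e3 + \cos\theta\,\e4 = \te4$, I obtain $\dif\te3 = -\dif\theta\,\te4 + \cos\theta\,\dif\e3 - \sin\theta\,\dif\e4$ and $\dif\te4 = \dif\theta\,\te3 + \sin\theta\,\dif\e3 + \cos\theta\,\dif\e4$. Pairing $\dif\te3$ and $\dif\te4$ with $\e1$ and $\e2$, the $\dif\theta\,\te{\bullet}$ terms disappear because $\te3, \te4 \perp \e1, \e2$, and the surviving terms reassemble into exactly the asserted rotation of the matrix $(\omega^i_3, \omega^i_4)_{i=1,2}$. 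The identities $\widetilde\omega^1_2 = \langle\dif\e2, \e1\rangle = \omega^1_2$ and $\widetilde\omega^5_i = \langle\dif\e{i}, \e5\rangle = \omega^5_i$ for the untouched indices $i = 1, 2$ hold verbatim, since $\e1$, $\e2$, $\e5$ are not altered.

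For the remaining identity I would pair $\dif\te4$ with $\te3$: the term $\dif\theta\,\te3$ contributes $\dif\theta$ because $\|\te3\| = 1$, while the other terms simplify using $\langle\dif\e{a}, \e{a}\rangle = 0$ and the antisymmetry $\omega^4_3 = -\omega^3_4$, so that the $\sin^2\theta$ and $\cos^2\theta$ coefficients combine to give $\widetilde\omega^3_4 = \omega^3_4 + \dif\theta$. Every computation here is elementary linear algebra together with the product rule; the only genuinely delicate point is the systematic bookkeeping of the $\dif\theta$ terms. They are essential --- the shift $\dif\theta$ in $\widetilde\omega^3_4$ is produced entirely by differentiating the rotation angle --- yet they must be shown to cancel in each of the other identities, which happens precisely because $\te3, \te4$ are orthogonal to $\e1, \e2, \e5$ and because the derivative of a unit vector is orthogonal to it.
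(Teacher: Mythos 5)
Your proof is correct, but it takes a genuinely different route from the paper's. The paper never differentiates the frame vectors: it starts from the (assumed) coframe relation $\omega^3=\cos\theta\,\widetilde\omega^3+\sin\theta\,\widetilde\omega^4$, $\omega^4=-\sin\theta\,\widetilde\omega^3+\cos\theta\,\widetilde\omega^4$, applies the exterior derivative, substitutes the structure equations \eqref{eq:structureHiper}, and reads the tilded connection forms off the resulting expressions for $\dif\widetilde\omega^3$ and $\dif\widetilde\omega^4$ --- implicitly invoking the uniqueness of the skew-symmetric forms satisfying the first structure equation. You instead work extrinsically in $\mathbb{R}^6$: you characterize every form as a pairing, $\widetilde\omega^A=\langle \dif\mathbf{g},\te{A}\rangle$ and $\widetilde\omega^B_A=\langle \dif\te{A},\te{B}\rangle$, differentiate $\te3=\cos\theta\,\e3-\sin\theta\,\e4$ and $\te4=\sin\theta\,\e3+\cos\theta\,\e4$ by the Leibniz rule, and evaluate each pairing directly, tracking where the $\dif\theta$ terms survive (only in $\widetilde\omega^3_4$) and where they die (against vectors orthogonal to $\te3,\te4$). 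What the paper's route buys is brevity and consistency with the intrinsic exterior-calculus formalism used throughout the article; what yours buys is completeness and transparency --- no appeal to a uniqueness lemma, and every claimed identity is verified individually, whereas the paper's displayed computation covers only $\dif\widetilde\omega^3,\dif\widetilde\omega^4$ and leaves $\widetilde\omega^1_2$, $\widetilde\omega^3_4$, $\widetilde\omega^5_i$ to the reader. One further point in your favor: your explicit restriction of $\widetilde\omega^5_i=\omega^5_i$ to the untouched indices $i=1,2$ is the mathematically correct one, since for $i=3,4$ the pair $(\widetilde\omega^5_3,\widetilde\omega^5_4)$ rotates exactly like the frame, so that part of the lemma as literally stated holds only when $\omega^5_3=\omega^5_4=0$ --- which is precisely the situation of Theorem \ref{Teorema2}, where the lemma is applied.
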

\begin{proof}
Taking exterior differentiate of 
\begin{equation*}
\left\{
\begin{aligned}
\omega^3&=\cos \theta \, \widetilde{\omega}^3 + \sin \theta \, \widetilde{\omega}^4,\\ 
\omega^4&=-\sin \theta \, \widetilde{\omega}^3 + \cos \theta \, \widetilde{\omega}^4
\end{aligned}
\right.
\end{equation*}
and  using \eqref{eq:structureHiper} we obtain
\begin{equation*}
\begin{aligned}
\dif \widetilde{\omega}^3&=-\left( \cos \theta \, \omega^3_1 - \sin \theta \, \omega^4_1 \right) \wedge \omega^1-\left( \cos \theta \, \omega^3_2 - \sin \theta \, \omega^4_2 \right) \wedge \omega^2-\left( \omega^3_4+\dif \theta\right)\wedge \widetilde{\omega}^4\\
\dif \widetilde{\omega}^4&=-\left( \sin \theta \, \omega^3_1 + \cos \theta \, \omega^4_1 \right) \wedge \omega^1-\left( \sin \theta \, \omega^3_2 + \cos \theta \, \omega^4_2 \right) \wedge \omega^2+\left( \omega^3_4+\dif \theta\right)\wedge \widetilde{\omega}^3.
\end{aligned}
\end{equation*}
\end{proof}

\begin{lemma}\label{lemma2}
In the hypothesis above. If the functions $f_3,  f_4,  g_3$ and $g_4$ are defined by
\begin{equation}\label{eq:defgf}
\left\{
\begin{aligned}
\omega^3_2=f_3 \, \omega^1+g_3 \, \omega^2\\
\omega^4_2=f_4 \, \omega^1+g_4 \, \omega^2.
\end{aligned}
\right.
\end{equation}
Then 
\begin{equation*}
\left[
\begin{array}{cc}
\widetilde{f}_3 & \widetilde{g}_3\\
\widetilde{f}_4 & \widetilde{g}_4
\end{array}
\right]
=
\left[
\begin{array}{cr}
\cos \theta & -\sin \theta\\
\sin \theta & \cos \theta
\end{array}
\right]
\left[
\begin{array}{cc}
f_3 & g_3\\
f_4 & g_4
\end{array}
\right]
\end{equation*}
and
\begin{equation*}
\begin{aligned}
\widetilde{f}_3^2 + \widetilde{f}_4^2&=f_3^2 +f_4^2 \, ,\quad \widetilde{f}_3\widetilde{g}_3 + \widetilde{f}_4\widetilde{g}_4&=f_3g_3+f_4g_4,\\
\widetilde{g}_3^2+\widetilde{g}_4^2&=g_3^2 + g_4^2 \, , \quad \widetilde{f}_3\widetilde{g}_4 - \widetilde{f}_4\widetilde{g}_3&=f_3g_4- f_4g_3.
\end{aligned}
\end{equation*}
\end{lemma}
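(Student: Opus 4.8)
The plan is to derive the transformation law for the pair $\omega^3_2,\omega^4_2$ directly from Lemma~\ref{lemma1}, read off its effect on the coefficient functions, and then obtain the four scalar identities as immediate consequences of the fact that the transition matrix is a rotation.

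First I would invoke the skew-symmetry $\omega^i_j+\omega^j_i=0$ from \eqref{eq:structureHiper} to write $\omega^3_2=-\omega^2_3$ and $\omega^4_2=-\omega^2_4$, together with the analogous relations for the tilded forms. Taking the second column of the matrix identity for $(\omega^1_3,\omega^2_3,\omega^1_4,\omega^2_4)$ proved in Lemma~\ref{lemma1} gives the transformation of $(\omega^2_3,\omega^2_4)$; negating via skew-symmetry yields
\begin{equation*}
\left[
\begin{array}{c}
\widetilde{\omega}^3_2\\
\widetilde{\omega}^4_2
\end{array}
\right]
=
\left[
\begin{array}{cr}
\cos \theta & -\sin \theta\\
\sin \theta & \cos \theta
\end{array}
\right]
\left[
\begin{array}{c}
\omega^3_2\\
\omega^4_2
\end{array}
\right].
\end{equation*}
The crucial observation is that the rotation acts only in the $\{\e3,\e4\}$-plane, so that $\omega^1$ and $\omega^2$ are left unchanged. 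Substituting the definitions \eqref{eq:defgf} on the right and comparing the coefficients of the common coframe elements $\omega^1,\omega^2$ on both sides produces exactly
\begin{equation*}
\left[
\begin{array}{cc}
\widetilde{f}_3 & \widetilde{g}_3\\
\widetilde{f}_4 & \widetilde{g}_4
\end{array}
\right]
=
\left[
\begin{array}{cr}
\cos \theta & -\sin \theta\\
\sin \theta & \cos \theta
\end{array}
\right]
\left[
\begin{array}{cc}
f_3 & g_3\\
f_4 & g_4
\end{array}
\right],
\end{equation*}
which is the first assertion of the lemma.

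Finally, writing $R$ for the rotation matrix and $F$ for the coefficient matrix, so that $\widetilde{F}=RF$, the four quadratic identities follow from the invariance of the Gram matrix and of the determinant under left multiplication by $R\in SO(2)$. Since $R^{T}R=I$, one has $\widetilde{F}^{T}\widetilde{F}=F^{T}R^{T}RF=F^{T}F$, whose three independent entries are precisely $f_3^2+f_4^2$, $f_3g_3+f_4g_4$ and $g_3^2+g_4^2$; and $\det\widetilde{F}=\det R\cdot\det F=\det F=f_3g_4-f_4g_3$. There is no genuine obstacle here: the only points requiring care are the bookkeeping with the skew-symmetry of the connection forms when passing from $\omega^2_\alpha$ to $\omega^\alpha_2$, and the recognition that the preserved quantities are exactly the entries of $F^{T}F$ together with $\det F$.
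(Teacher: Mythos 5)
Your proof is correct, and it reaches the conclusion by a somewhat different route than the paper. The paper's proof does not invoke Lemma~\ref{lemma1} at all: it expands the covariant derivative $\nabla\e{2}=\omega^1_2\otimes\e{1}+\omega^3_2\otimes\e{3}+\omega^4_2\otimes\e{4}$, substitutes $\e{3}=\cos\theta\,\te{3}+\sin\theta\,\te{4}$ and $\e{4}=-\sin\theta\,\te{3}+\cos\theta\,\te{4}$, and reads off $\widetilde{f}_\alpha,\widetilde{g}_\alpha$ as the coefficients of $\omega^1,\omega^2$ in front of $\te{3},\te{4}$; the four quadratic identities are then left implicit. You instead take the transformation law for $(\omega^2_3,\omega^2_4)$ already recorded in Lemma~\ref{lemma1}, pass to $(\omega^3_2,\omega^4_2)$ via the skew-symmetry $\omega^i_j+\omega^j_i=0$ of \eqref{eq:structureHiper} (valid for both frames, since the tilded frame is again orthonormal with the same tangential span), and compare coefficients against $\omega^1,\omega^2$, which the rotation leaves fixed. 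The two derivations encode the same geometric fact --- rotating $\{\e{3},\e{4}\}$ rotates the corresponding connection forms --- but yours makes Lemma~\ref{lemma2} a formal corollary of Lemma~\ref{lemma1}, avoiding a second frame computation, and it supplies an explicit, conceptual proof of the invariance statements: the preserved quantities are exactly the entries of $F^{T}F$ and $\det F$, unchanged under left multiplication by $R\in SO(2)$. This last point is a genuine gain in completeness, since the paper's proof stops at the displayed computation of $\nabla\e{2}$ and never actually verifies the quadratic identities.
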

\begin{proof}
Since
\begin{equation*}
\left\{
\begin{aligned}
\e{3}&=\cos \theta \, {\te{3}} + \sin \theta \, {\te{4}},\\ 
\e{4}&=-\sin \theta \, {\te{3}} + \cos \theta \, {\te{4}}.
\end{aligned}
\right.
\end{equation*}
It follows from \eqref{eq:defgf} that
\begin{equation*}
\begin{aligned}
\left(\nabla \e{2}\right)&=\omega^1_2  \otimes \e{1}+\omega^3_2 \otimes \e{3} + \omega^4_2 \otimes \e{4}=\\
&=\omega^1_2  \otimes \e{1}+\left[\left(\cos \theta \, f_3 - \sin \theta \, f_4 \right) \, \omega^1+\left( \cos \theta \, g_3 - \sin \theta \, g_4 \right)\omega^2 \right]\otimes {\te{3}}+\\
&+\left[\left(\sin \theta \, f_3 + \cos \theta \, f_4 \right) \, \omega^1+\left( \sin \theta \, g_3 + \cos \theta \, g_4 \right)\omega^2 \right]\otimes {\te{4}},
\end{aligned}
\end{equation*}
where $\nabla$ is the Levi-Civita connections of $M^4$.

\end{proof}

\begin{theorem}\label{Teorema2}
Let $\mathbf{g}\! :M^4 \rightarrow \mathbb{S}^5$ be a complete, oriented minimal immersed hypersurface of $\mathbb{S}^5$ with zero Gauss-Kronecker curvature and zero $3$-mean curvature. If the square $S$ of the length of the second fundamental form is nowhere zero, then there exists a minimal immersed $\tilde{\eta}\!: \mathbb{V}^2 \rightarrow \mathbb{S}^5$ and a local isometry $\tau \! : M^4_{*} \to \mathcal{N}_*$, where $M^4_{*}=M^4 \setminus \left \{\mathbf{g}^{-1}\left( \B_{\pi(p)}(T_{\pi(p)}\mathbb{V})^{\perp} \cap \mathbf{\widetilde{\eta}}_*(T_{\pi(p)}\mathbb{V})^{\perp} \right)\right\}$, such that $\eta|_{M^4_*}=\widetilde{\eta}\circ \pi_1 \circ \tau$ and $\mathbf{x_{\boldsymbol{\widetilde{\eta}}}} \circ \tau=\mathbf{g}|_{M^4_{*}}.$  
\end{theorem}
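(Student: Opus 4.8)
The plan is to recover the base surface as the spherical Gauss map of $\mathbf{g}$ and to realize $M^4_*$ as an open set of its unit normal bundle, thereby inverting the construction of Theorem~\ref{Teorema1}.

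First I would pin down the principal curvatures $\lambda_1,\dots,\lambda_4$. By \eqref{eq:Hr} the hypotheses $H_1=H_3=H_4=0$ read $\sum_i\lambda_i=0$, $\sum_{i<j<k}\lambda_i\lambda_j\lambda_k=0$ and $\lambda_1\lambda_2\lambda_3\lambda_4=0$. The last equation forces some $\lambda_i=0$, say $\lambda_4=0$; then $H_3=0$ gives $\lambda_1\lambda_2\lambda_3=0$, so a second one vanishes, say $\lambda_3=0$; and $H_1=0$ then yields $\lambda_2=-\lambda_1$. Since $\SF\neq0$ we get $\lambda_1=:\lambda\neq0$, so the principal curvatures are $\lambda,-\lambda,0,0$ and the shape operator $A$ has rank exactly two everywhere. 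I would fix the adapted frame of Lemma~\ref{lemma1} so that, by \eqref{eq:Cartan_Hyper}, $\omega^5_1=\lambda\,\omega^1$, $\omega^5_2=-\lambda\,\omega^2$ and $\omega^5_3=\omega^5_4=0$; the relative nullity distribution $\Delta=\ker A=\operatorname{span}\{\e3,\e4\}$ is then smooth of rank two. Because $A$ has constant rank, the standard relative nullity argument based on the Codazzi equation of \eqref{eq:G-C_Hyper} shows $\Delta$ is integrable with totally geodesic leaves in $M^4$, and since $A$ vanishes on $\Delta$ these leaves are totally geodesic in $\mathbb{S}^5$, i.e.\ open pieces of great $2$-spheres; as $M^4$ is complete with constant index of relative nullity, the leaves are complete, hence full great spheres. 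Locally this produces a leaf space $\mathbb{V}^2$ and a leaf projection $\varpi\colon M^4_*\to\mathbb{V}^2$.

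Next I would introduce the spherical Gauss map $\eta=\e5\colon M^4\to\mathbb{S}^5$. From \eqref{eq:structureHiper} one has $\dif\e5=-\lambda\,\omega^1\otimes\e1+\lambda\,\omega^2\otimes\e2$, so $\ker\dif\eta=\Delta$ and $\eta$ is constant along the nullity leaves; hence $\eta$ factors as $\eta=\widetilde{\eta}\circ\varpi$ for a well-defined $\widetilde{\eta}\colon\mathbb{V}^2\to\mathbb{S}^5$. Since $\dif\eta$ has rank two on $\operatorname{span}\{\e1,\e2\}$, the map $\widetilde{\eta}$ is an immersion with tangent plane $\operatorname{span}\{\e1,\e2\}$, position vector $\e5$, and normal space $\operatorname{span}\{\mathbf{g},\e3,\e4\}$ in $\mathbb{S}^5$. \textbf{The main obstacle} is to prove that $\widetilde{\eta}$ is minimal: computing its second fundamental form by differentiating $\e1,\e2$ and projecting onto $\operatorname{span}\{\mathbf{g},\e3,\e4\}$ expresses it through the connection forms $\omega^3_i,\omega^4_i,\omega^5_i$, and I would show the trace vanishes by feeding the Codazzi relations for $\dif\omega^5_i$ from \eqref{eq:G-C_Hyper} into the transformation rules of Lemmas~\ref{lemma1} and~\ref{lemma2} for these forms under rotations of $\{\e3,\e4\}$. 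Everything else is formal.

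Finally I would invert the construction. At $\mathbf{g}(p)$ the vectors $\e1,\e2$ are tangent and $\e5$ is normal to $M^4$, so the unit position vector $\mathbf{g}(p)$ is orthogonal to $\e1(p),\e2(p),\e5(p)$ and hence is a unit normal of $\widetilde{\eta}$ at $\widetilde{\eta}(\varpi(p))=\e5(p)$. I would then define
\[
\tau\colon M^4_*\longrightarrow\mathcal{N},\qquad \tau(p)=\bigl(\varpi(p),\,\mathbf{g}(p)\bigr),
\]
so that $\pi_1\circ\tau=\varpi$ and $\mathbf{x_{\boldsymbol{\widetilde{\eta}}}}\circ\tau(p)=\mathbf{g}(p)$ by the very definition of $\mathbf{x_{\boldsymbol{\widetilde{\eta}}}}$; this gives at once the two identities $\eta|_{M^4_*}=\widetilde{\eta}\circ\pi_1\circ\tau$ and $\mathbf{x_{\boldsymbol{\widetilde{\eta}}}}\circ\tau=\mathbf{g}|_{M^4_*}$. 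By definition $M^4_*$ is exactly the locus where $\mathbf{g}(p)$ avoids the degenerate normals removed in \eqref{eq:N*}, so $\tau$ takes values in $\mathcal{N}_*$. Since $\mathbf{g}=\mathbf{x_{\boldsymbol{\widetilde{\eta}}}}\circ\tau$ is an immersion and $\dim M^4_*=\dim\mathcal{N}_*=4$, the map $\tau$ is a local diffeomorphism, and the identity $\mathbf{x_{\boldsymbol{\widetilde{\eta}}}}\circ\tau=\mathbf{g}$ shows it pulls the first fundamental form \eqref{eq:1form_fund2} of $\mathcal{N}_*$ back to the metric of $M^4$; hence $\tau$ is a local isometry. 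This identifies $\mathbf{g}|_{M^4_*}$ with the minimal hypersurface $\mathbf{x_{\boldsymbol{\widetilde{\eta}}}}$ produced by Theorem~\ref{Teorema1}, as desired.
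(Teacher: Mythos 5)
Your outline is the paper's own: principal curvatures $\lambda,-\lambda,0,0$; the nullity distribution $\mathcal{D}=\operatorname{span}\{\e3,\e4\}$ integrable with leaves totally geodesic in $M^4$ and in $\mathbb{S}^5$; the Gauss map $\eta=\e5$ descending to an immersion $\widetilde{\eta}$ of the leaf space; and finally $\tau(p)=(\pi(p),\mathbf{g}(p))$, with the local isometry read off from $\mathbf{x}_{\widetilde{\eta}}\circ\tau=\mathbf{g}$. However, the step you yourself call ``the main obstacle'' --- minimality of $\widetilde{\eta}$ --- is precisely the step you do not carry out, and the mechanism you propose would not carry it out. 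Lemmas \ref{lemma1} and \ref{lemma2} are pure covariance statements: they record how $\omega^i_3,\omega^i_4$ and $f_3,f_4,g_3,g_4$ transform under a rotation of $\{\e3,\e4\}$, and no amount of equivariance can force a trace to vanish. What is actually needed (and what the paper does) is to differentiate $\omega^5_i=\lambda_i\,\omega^i$, compare with the Codazzi equation of \eqref{eq:G-C_Hyper} to obtain $\dif\lambda_i\wedge\omega^i+(\lambda_k-\lambda_i)\,\omega^i_k\wedge\omega^k=0$ (equation \eqref{eq:dif_lambda}), and evaluate this $2$-form on the pairs $(\e3,\e1)$, $(\e3,\e2)$, $(\e4,\e1)$, $(\e4,\e2)$. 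This yields $\e{3}[\lambda]+\lambda\,\omega^1_3(\e{1})=0=\e{3}[\lambda]+\lambda\,\omega^2_3(\e{2})$ and the analogous identities with $\e4$, i.e. $\omega^1_3(\e{1})=\omega^2_3(\e{2})$ and $\omega^1_4(\e{1})=\omega^2_4(\e{2})$, which is exactly the trace-freeness \eqref{eq:minimal_1} of the shape operators $\widetilde{A}^3,\widetilde{A}^4$ (trace-freeness of $\widetilde{A}^5=\lambda^{-1}\mathrm{diag}(1,-1)$ being automatic). This computation is the content of the theorem, not a formality to be deferred.

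The second gap is global. You write that the foliation ``locally produces a leaf space $\mathbb{V}^2$,'' but the statement being proved asserts a single surface $\mathbb{V}^2$, a single immersion $\widetilde{\eta}$, and a single map $\tau$ defined on all of $M^4_*$; a merely local leaf space gives only locally defined $\widetilde{\eta}$'s and $\tau$'s. Leaf spaces of foliations are in general not manifolds (not even Hausdorff), so passing to the quotient requires an argument. The paper supplies it: by Ferus's theorem the leaves $\mathcal{F}^2$ are complete and totally geodesic; each is carried by $\mathbf{g}$ onto a great $2$-sphere, and $\mathbf{g}|_{\mathcal{F}^2}\!:\mathcal{F}^2\to\mathbb{S}^2$ is a covering map, hence a diffeomorphism, so every leaf is a compact $2$-sphere and the foliation is regular; Palais's theorem then endows $\mathbb{V}^2=M^4/\mathcal{F}^2$ with a manifold structure for which $\pi$ is a submersion. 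Without this globalization (or some substitute for it), the objects named in the conclusion of the theorem do not exist as stated. The rest of your argument --- that $\mathbf{g}(p)$ is a unit normal of $\widetilde{\eta}$ at $\pi(p)$, that $\tau$ lands in $\mathcal{N}_*$ by the very definition of $M^4_*$, and that $\tau$ is a local isometry because $\mathbf{x}_{\widetilde{\eta}}\circ\tau=\mathbf{g}$ and the metric of $\mathcal{N}_*$ is the one induced by $\mathbf{x}_{\widetilde{\eta}}$ --- is correct and coincides with the paper's closing step.
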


\begin{proof}
Let $\mathcal{B} =\{ \e1, \e2, \e{3}, \e{4}, \e{5}\}$ be an orthonormal frame on $\mathbb{S}^{5}$ \emph{adapted} to the immersion $\mathbf{g}\! :M^4 \rightarrow \mathbb{S}^5$ in the sense that $\{\e1,\ldots,\e{4}\}$ spans $\mathbf{g}_*(TM)$, $\e{5}$ determines a global Gauss map ${\eta} \! : M^4 \rightarrow \mathbb{S}^5$ and
as described in \eqref{eq:Cartan_Hyper} holds
\begin{equation*}
      \omega^5_i =\lambda_{i}\, \omega^i, \quad i=1,2,3,4.	
\end{equation*}
By hypothesis and \eqref{eq:Hr}, we have exactly two of the $\lambda_i$ are zero and the sum of the other two is zero. Hence, we may assume that 
\begin{equation*}
      \lambda=\lambda_1=-\lambda_2>0, \, \lambda_3=\lambda_4=0
\end{equation*} 
and
\begin{equation}\label{eq:lambda}
\omega^5_1=\lambda \, \omega^1, \quad \omega^5_2=-\lambda \, \omega^2, \quad \omega^5_3=\omega^5_4=0.
\end{equation}
In this case, we have two directions and one plane well defined. Since $S>0$ on $M^4$ then 
\begin{equation*}
\mathcal{D}_p:=\left\{v \in T_pM^4 :  \B_p(v,w)=0, \ \forall \ w \in T_pM^4\right\} 
\end{equation*}
is a $2$-dimensional distribution. Now, we proceed to show that $\mathcal{D}_p$ is involutive, that its integral surfaces $\mathcal{F}^2$ are totally geodesic in $M^4$ and $\mathbf{g}(\mathcal{F}^2)$ are totally geodesic in $\mathbb{S}^5.$ 
From \eqref{eq:lambda} and Codazzi equations given in \eqref{eq:G-C_Hyper} we have that 
\begin{equation}\label{eq:C_Hyper}
  \left\{
    \begin{aligned}
      \omega^3_1 \wedge \omega^1-\omega^3_2 \wedge \omega^2=0;\\
       \omega^4_1 \wedge \omega^1-\omega^4_2 \wedge \omega^2=0.
    \end{aligned}
  \right.
\end{equation}
Let $\mathfrak{I}\left(\omega^1, \omega^2\right)$ be the ideal generated by $\omega^1$ and  $\omega^2$. By Cartan's lemma \ $\omega^3_1, \ \omega^3_2, \ \omega^4_1$ and $\omega^4_2$ \ belong to the ideal $\mathfrak{I}\left(\omega^1, \omega^2\right)$. From \eqref{eq:structureHiper} follows  
\begin{equation*}
\dif \omega^i \, \in \, \mathfrak{I}\left(\omega^1, \omega^2\right), \ i=1,2  \quad\mathrm{or} \quad \omega^i\left([\e{3}, \, \e{4}]\right)=0, \ i=1,2.
\end{equation*}
Therefore, $\mathcal{D}_p$ is involutive.  Then by Frobenius Theorem there exists an unique  maximal, connected, integral surface $\mathcal{F}^2$ of $\mathcal{D}_p$ through $p.$ Since $\mathbf{g}\! : M^4 \rightarrow \mathbb{S}^5$ is an immersion of a complete manifold, a theorem of D. Ferus \cite{Ferus.1971} implies that $\mathcal{F}^2$ is complete and totally geodesic in $M^4.$   
From \eqref{eq:C_Hyper} it is easy to see that 
\begin{equation}\label{eq:FC1}
  \left\{
    \begin{aligned}
     & \omega^3_2(\e{1}) + \omega^3_1(\e{2}) =0;\\
      &\omega^4_2(\e{1}) + \omega^4_1(\e{2}) =0;\\
      &\omega^i_j(\e{k})=0, \ i=1,2; \ j,k=3,4. 
    \end{aligned}
  \right.
\end{equation}
Substituting \eqref{eq:lambda} and \eqref{eq:FC1} into \eqref{eq:structureHiper} we obtain 
\begin{equation}\label{eq:FC2}
  \left\{
    \begin{aligned}
      \dif \e{3} (\e{3})&=-\mathbf{g} + \omega^4_3(\e{3})\e{4};\\
      \dif \e{3} (\e{4})&= \ \ \omega^4_3(\e{4})\e{4};\\
      \dif \e{4} (\e{3})&= \ \ \omega^3_4(\e{3})\e{3};\\
      \dif \e{4} (\e{4})&=-\mathbf{g} + \omega^3_4(\e{4})\e{3}.\\ 
    \end{aligned}
  \right.
\end{equation}
 This implies that 
 \begin{equation*}
 \omega^k(\overline{\nabla}_{\e{j}}\e{i})=0, \quad k=1,2; \quad i,j=3,4,
 \end{equation*}
 where $\overline{\nabla}$ is the Levi-Civita connections of $\mathbb{S}^5$. Therefore, $\mathbf{g}(\mathcal{F}^2)$ is totally geodesic in $\mathbb{S}^5.$ Since $\mathcal{F}^2$ is complete and $\left. \mathbf{g}\right|_{\mathcal{F}^2}\! :\mathcal{F}^2 \rightarrow \mathbb{S}^5$ is an isometric immersion, then  $\mathbf{g}(\mathcal{F}^2)$ is an unitary 2-sphere $\mathbb{S}^2$ in $\mathbb{S}^5.$ So, we have that $\left. \mathbf{g}\right|_{\mathcal{F}^2}\! :\mathcal{F}^2 \rightarrow \mathbb{S}^2$ is a covering map (see \cite[p.146, Prop.6.16]{Lima.2003}). Thus, $\mathcal{F}^2$ is a 2-sphere in $M^4$ and $\left. \mathbf{g}\right|_{\mathcal{F}^2}\! :\mathcal{F}^2 \rightarrow \mathbb{S}^2$ is a diffeomorphism (see \cite[p.141]{Lima.2003}). Therefore, the maximal integral surface $\mathcal{F}^2$ is regular (see \cite[p.98]{Tamura.1992}) and a theorem of R. Palais (see \cite{Palais.1957}) implies that the quotient space   
\begin{equation*}
\mathbb{V}^2=\left. M^4\right/ \mathcal{F}^2
\end{equation*}
can be endowed with a structure of a 2-dimensional differential manifold such that $\pi \! : M^4 \rightarrow \mathbb{V}^2$ is a submersion.
The Gauss map $\eta \! : M^4 \rightarrow \mathbb{S}^5$ induces a smooth map $\widetilde{\eta} \! : \mathbb{V}^2 \rightarrow \mathbb{S}^5$ such that $\widetilde{\eta} \circ \pi= \eta,$ 
\begin{equation*}
\xymatrix{
M^4 \ar[d]_{\pi} \ar[dr]^{\eta} &  \\
\mathbb{V}^2 \ar[r]_{\widetilde{\eta}} & \mathbb{S}^5.
}
\end{equation*}
In fact, from \eqref{eq:lambda}, we have that $\overline{\nabla}_{\e{3}}\eta$ and $\overline{\nabla}_{\e{4}}\eta$ vanish. Thus, $\eta$ is constant along the integral surfaces $\mathcal{F}^2$ and $\widetilde{\eta}$ is well defined.
Let $S$ be a smooth transversal surface to the leaf $\mathcal{F}^2$ of $\mathcal{D}_p$ through a point $p  \in  M^4$ such that  $T_pS = \text{span}\{\e{1}|_p, \e{2}|_p\}$ and $T_p\mathcal{F} = \text{span}\{\e{3}|_p, \e{4}|_p\}$. Since $\pi \! : M^4 \rightarrow \mathbb{V}^2$ is a submersion, then 
\begin{equation*}
\text{span} \left \{\dif \pi_p(\e{1}|_p), \dif \pi_p (\e{2}|_p)\right \} =  T_{\pi(p)} \mathbb{V}^2.  
\end{equation*}
The third formula given in \eqref{eq:structureHiper} implies that  
\begin{equation*}
    \begin{aligned}
   \dif \widetilde{\eta}_{\pi(p)} \left( \dif \pi_p(\e{1}|_p)\right)&=\dif (\widetilde{\eta} \circ {\pi})_p \left(\e{1}|_p\right)=\dif \eta_p \left(\e{1}|_p\right)=-\lambda(p) \, \e{1}|_p;\\
   \dif \widetilde{\eta}_{\pi(p)} \left( \dif \pi_p(\e{2}|_p)\right)&=\lambda(p) \, \e{2}|_p.
    \end{aligned}
\end{equation*}
So, the first fundamental form of $\widetilde{\eta}$ is given by
\begin{equation*}
ds^2_{\widetilde{\eta}}= \lambda^2 \left( \omega^1 \otimes \omega^1 + \omega^2 \otimes \omega^2\right).
\end{equation*}
Since $\lambda(p)>0$ for all $p \in M^4$, then $\widetilde{\eta} \! : \mathbb{V}^2 \rightarrow \mathbb{S}^5$ is an immersion.  Now, we will show that $\widetilde{\eta}$ is a minimal immersion. Let 
\begin{equation*}
\{X_1, X_2 \} =\left \{\dif \pi_p\left(\frac{1}{\lambda}\, \e{1}|_p\right), \ \dif \pi_p \left(\frac{1}{\lambda} \,\e{2}|_p\right)\right \}   
\end{equation*}
be an orthogonal basis for $T_{\pi(p)} \mathbb{V}^2$ and let $\{ X_3, X_4, X_5\}$ be an orthogonal frame in the normal bundle of $\widetilde{\eta}$ such that
\begin{equation*}
X_3 \circ \pi|_S = \e{3}|_S,  \ X_4 \circ \pi|_S = \e{4}|_S, \ X_5 \circ \pi|_S = \mathbf{g}|_S. 
\end{equation*}
The second fundamental form $\B _{\pi(p)}$ of $\widetilde{\eta}$ is determined by the bilinear forms 
\begin{equation*}
\mathrm{II}^{\alpha}_{\widetilde{\eta}}:=-\langle \dif \widetilde{\eta}, \, \dif X_{\alpha} \rangle_{\pi(p)}, \quad \alpha=3,4,5.
\end{equation*}
From \eqref{eq:structureHiper} it is easy to see that
\begin{equation*}
\begin{aligned}
\mathrm{II}^{3}_{\widetilde{\eta}}(X_i \otimes X_j)&=-\frac{1}{\lambda^2} \langle \dif \eta, \, \dif \e{3} \rangle \left(\e{i}|_p \otimes \e{j}|_p\right), \quad i,j=1,2;\\
&=\frac{1}{\lambda}\left( \omega^1 \otimes \omega^1_3 - \omega^2 \otimes \omega^2_3\right)\left(\e{i}|_p \otimes \e{j}|_p\right);\\
\mathrm{II}^{4}_{\widetilde{\eta}}(X_i \otimes X_j)&=\frac{1}{\lambda}\left( \omega^1 \otimes \omega^1_4 - \omega^2 \otimes \omega^2_4\right)\left(\e{i}|_p \otimes \e{j}|_p\right);\\
\mathrm{II}^{5}_{\widetilde{\eta}}(X_i \otimes X_j)&=\frac{1}{\lambda}\left( \omega^1 \otimes \omega^1 - \omega^2 \otimes \omega^2\right)\left(\e{i}|_p \otimes \e{j}|_p\right).\\
\end{aligned}
\end{equation*}
Denote by $\widetilde{A}^{\alpha}$ the shape operators of $\widetilde{\eta}$ at $\pi(p)$, then

\begin{equation*}
\widetilde{A}^3=\frac{1}{\lambda(p)}\left[
\begin{array}{rr}
\omega^1_3(\e{1}) & \omega^1_3(\e{2})\\
-\omega^2_3(\e{1}) & -\omega^2_3(\e{2})
\end{array}
\right], \quad
\widetilde{A}^4=\frac{1}{\lambda(p)}\left[
\begin{array}{rr}
\omega^1_4(\e{1}) & \omega^1_4(\e{2})\\
-\omega^2_4(\e{1}) & -\omega^2_4(\e{2})
\end{array}
\right],   
\end{equation*}
 
\begin{equation*}
\widetilde{A}^5=\frac{1}{\lambda(p)}\left[
\begin{array}{rr}
1 & 0\\
0 & -1
\end{array}
\right]. 
\end{equation*}
Hence, $\widetilde{\eta}$ is a minimal immersion if and only if
\begin{equation}\label{eq:minimal_1}
\left\{
\begin{array}{l}
\omega^1_3(\e{1})-\omega^2_3(\e{2})=0\\
\omega^1_4(\e{1})-\omega^2_4(\e{2})=0.
\end{array}
\right. 
\end{equation} 
By taking exterior differentiations of \eqref{eq:lambda} we have
\begin{equation*}
\dif \omega^5_i = \dif \lambda_i \wedge \omega^i - \lambda_i \, \omega^i_k \wedge \omega^k.
\end{equation*} 
From Codazzi equation given in \eqref{eq:G-C_Hyper} it follows that
\begin{equation}\label{eq:dif_lambda}
\dif \lambda_i \wedge \omega^i +(\lambda_k-\lambda_i) \,\omega^i_k \wedge \omega^k=0, \quad i =1,2.
\end{equation}
Evaluating these equation on $\e{k} \otimes \e{i}$ we obtain
\begin{equation}\label{eq:dif_lambda12}
\left\{
\begin{array}{l}
\e{1}[\lambda]-2\lambda \, \omega^1_2(\e{2})=0\\
\e{2}[\lambda]+2\lambda \, \omega^1_2(\e{1})=0
\end{array},
\right.
\end{equation}
\begin{equation}\label{eq:dif_lambda34}
\left\{
\begin{array}{l}
\e{3}[\lambda]+\lambda \, \omega^1_3(\e{1})=0\\
\e{3}[\lambda]+\lambda \, \omega^2_3(\e{2})=0
\end{array}\right., \quad
\left\{
\begin{array}{l}
\e{4}[\lambda]+\lambda \, \omega^1_4(\e{1})=0\\
\e{4}[\lambda]+\lambda \, \omega^2_4(\e{2})=0
\end{array}
\right. ,
\end{equation} 
\begin{equation}\label{eq:dif_lambda34_B}
\left\{
\begin{array}{l}
2\omega^1_2(\e{3})+\omega^2_3(\e{1})=0\\
2\omega^1_2(\e{4})+\omega^2_4(\e{1})=0
\end{array}\right. .
\end{equation} 
Therefore, the equations \eqref{eq:dif_lambda34} imply that the condition \eqref{eq:minimal_1} is satisfied. Thus,


\begin{equation*}
\widetilde{A}^3=\frac{1}{\lambda(p)}\left[
\begin{array}{rc}
-g_3 & f_3\\
f_3 & g_3
\end{array}
\right], \quad
\widetilde{A}^4=\frac{1}{\lambda(p)}\left[
\begin{array}{rc}
-g_4 & f_4\\
f_4 & g_4
\end{array}
\right],   
\end{equation*}
 
\begin{equation*}
\widetilde{A}^5=\frac{1}{\lambda(p)}\left[
\begin{array}{rr}
1 & 0\\
0 & -1
\end{array}
\right]. 
\end{equation*}
From  \eqref{eq:FC1}, \eqref{eq:dif_lambda12}, \eqref{eq:dif_lambda34} and \eqref{eq:dif_lambda34_B}, we have the skew-symmetric matrices 
\begin{equation*}
\left(\omega^i_j(\e{1})\right)=\left[
\begin{array}{cccc}
0 & -\frac{1}{2}\e{2}[\ln \lambda] & -g_3 & -g_4\\
* & 0 & -f_3 & -f_4\\
* & * & 0 & \omega^3_4(\e{1})\\
* & * & * & 0\\
\end{array}
\right], 
\end{equation*}

\begin{equation*}
\left(\omega^i_j(\e{2})\right)=\left[
\begin{array}{cccc}
0 & \frac{1}{2}\e{1}[\ln \lambda] & f_3 & f_4\\
* & 0 & -g_3 & -g_4\\
* & * & 0 & \omega^3_4(\e{2})\\
* & * & * & 0\\
\end{array}
\right], 
\end{equation*} 
 
\begin{equation*}
\left(\omega^i_j(\e{3})\right)=\left[
\begin{array}{cccc}
0 & \frac{f_3}{2} & 0 & 0\\
* & 0 & 0 & 0\\
* & * & 0 & \omega^3_4(\e{3})\\
* & * & * & 0\\
\end{array}
\right], \quad
\left(\omega^i_j(\e{4})\right)=\left[
\begin{array}{cccc}
0 & \frac{f_4}{2} & 0 & 0\\
* & 0 & 0 & 0\\
* & * & 0 & \omega^3_4(\e{4})\\
* & * & * & 0\\
\end{array}
\right],
\end{equation*}  
and  relations
\begin{equation}
\left\{
\begin{aligned}
\e{3}[\lambda]&=\lambda g_3\\
\e{4}[\lambda]&=\lambda g_4 .
\end{aligned}
\right.
\end{equation}

Using the equations given in \eqref{eq:KG_KN} we have that $\K$ and $\KN$ of the minimal immersion $\widetilde{\eta} \! : \mathbb{V}^2 \rightarrow \mathbb{S}^5$ are given by
\begin{equation}\label{eq:Gauss_Immersion1}
\K(\pi(p))=1-\frac{1}{\lambda^2}\left(1+f_3^2+f_4^2+g_3^2+g_4^2 \right),
\end{equation}

\begin{equation}\label{eq:Gauss_Immersion2}
R^3_{412}=\frac{2}{\lambda^2}\left(f_3g_4-f_4g_3\right), \quad R^3_{512}=-\frac{2f_3}{\lambda^2}, \quad R^4_{512}=-\frac{2f_4}{\lambda^2},
\end{equation}

\begin{equation}\label{eq:Normal_Immersion}
\KN(\pi(p))=\frac{16}{\lambda^4}\left(f_3^2+f_4^2+(f_3g_4-f_4g_3)^2\right).
\end{equation} 
By a straightforward calculation it follows that
\begin{equation}\label{eq:bracket}
\begin{aligned}
\left[\e{1},\e{2}\right]&=-\frac{1}{2}\e{2}[\ln \lambda]\e{1}+\frac{1}{2}\e{1}[\ln \lambda]\e{2}+2 f_3 \e{3} + 2 f_4 \e{4},\\
[\e{3},\e{1}]&= \ \ \ \ g_3 \e{1} + \frac{f_3}{2} \e{2} + \omega^3_4(\e{1})\e{4},\\
[\e{3},\e{2}]&=-\frac{f_3}{2} \e{1} +g_3 \e{2} + \omega^3_4(\e{2})\e{4},\\
[\e{4},\e{1}]&= \ \ \ \ g_4 \e{1} + \frac{f_4}{2} \e{2} - \omega^3_4(\e{1})\e{3},\\
[\e{4},\e{2}]&=-\frac{f_4}{2} \e{1} +g_4 \e{2} - \omega^3_4(\e{2})\e{3},\\
[\e{3},\e{4}]&=  \ \omega^3_4(\e{3})\e{3}+\omega^3_4(\e{4})\e{4}.
\end{aligned}
\end{equation} 
Applying the relation between the bracket operation on vector fields and the exterior  
differentiation of 1-form, we obtain the derivatives of the functions $f_i$ and $g_i$:
\begin{equation}\label{eq:difI}
\begin{aligned}
\e{1}[f_3]&=-\e{2}[g_3]-\omega^3_4(\e{1})f_4 - \omega^3_4(\e{2}) g_4\\
\e{2}[f_3]&=\ \ \, \e{1}[g_3]+\omega^3_4(\e{1})g_4 - \omega^3_4(\e{2}) f_4\\
\e{1}[f_4]&= -\e{2}[g_4]+\omega^3_4(\e{1})f_3 + \omega^3_4(\e{2}) g_3\\
\e{2}[f_4]&=\ \ \, \e{1}[g_4]-\omega^3_4(\e{1})g_3 + \omega^3_4(\e{2}) f_3;
\end{aligned}
\end{equation}  
 
\begin{equation}\label{eq:difII}
\begin{aligned}
\e{3}[f_3]&=2 f_3 g_3 - \omega^3_4(\e{3}) f_4\\
\e{3}[f_4]&=f_3 g_4 + f_4 g_3 + \omega^3_4(\e{3}) f_3\\
\e{3}[g_3]&= g_3^2 - f_3^2 + 1 - \omega^3_4(\e{3}) g_4\\
\e{3}[g_4]&=g_3 g_4 - f_3 f_4 + \omega^3_4(\e{3}) g_3;
\end{aligned}
\end{equation}   
 
\begin{equation}\label{eq:difIII}
\begin{aligned}
\e{4}[f_3]&=f_3 g_4 + f_4 g_3 - \omega^3_4(\e{4}) f_4\\
\e{4}[f_4]&=2 f_4 g_4 + \omega^3_4(\e{4}) f_3\\
\e{4}[g_3]&=g_3 g_4 - f_3 f_4 - \omega^3_4(\e{4}) g_4\\
\e{4}[g_4]&= g_4^2 - f_4^2 + 1 + \omega^3_4(\e{4}) g_3.
\end{aligned}
\end{equation} 
By a straightforward calculation, using \eqref{eq:difII} and \eqref{eq:difIII}, we have that
\begin{equation}\label{eq:dif_KN}
\begin{aligned}
\e{3}\left[f_3^2+f_4^2+g_3^2+g_4^2\right]&=2g_3\left(f_3^2+f_4^2+g_3^2+g_4^2+1\right)\\
\e{4}\left[f_3^2+f_4^2+g_3^2+g_4^2\right]&=2g_4\left(f_3^2+f_4^2+g_3^2+g_4^2+1\right)\\
\e{3}[R^3_{412}]&=R^4_{512}\\
\e{4}[R^3_{412}]&=-R^3_{512}\\
\e{3}[R^3_{512}]&=-\omega^3_4(\e{3}) R^4_{512}\\
\e{4}[R^3_{512}]&=R^3_{412}-\omega^3_4(\e{4})R^4_{512}\\
\e{3}[R^4_{512}]&=-R^3_{412}+\omega^3_4(\e{3}) R^3_{512}\\
\e{4}[R^4_{512}]&=\omega^3_4(\e{4}) R^3_{512}.
\end{aligned}
\end{equation} 
Using the above equations it follows that
\begin{equation*} 
\e{3}[\K]=0, \quad \e{4}[\K]=0, \quad \e{3}[\KN]=0, \quad \e{4}[\KN]=0.
\end{equation*}
Therefore, the functions $\K$ and $\KN$ are constant on  $\mathcal{F}^2$, which is according with the fact that $\tilde{\eta}$ is well defined.

By Theorem \ref{Teorema1} \ $\mathbf{x_{\boldsymbol{\widetilde{\eta}}}}:\mathcal{N}_* \rightarrow \mathbb{S}^5$ is an immersed minimal hypersurface of $\mathbb{S}^5$ with zero Gauss-Kronecker and $3$-mean curvatures. 

Define the map $\tau \! : M^4_{*} \to \mathcal{N}_{*}$ by $\tau(p)=(\pi(p), \mathbf{g}(p)).$
Thus   $\mathbf{x_{\boldsymbol{\widetilde{\eta}}}} \circ \tau (p)= \mathbf{g}(p).$   Since, $\pi=\pi_1 \circ \tau$ we have that $\eta|_{M^4_{*}}=\widetilde{\eta} \circ \pi_1 \circ \tau.$  Finally, the metric on $\mathcal{N}_{*}$ is induced by $\mathbf{x_{\boldsymbol{\widetilde{\eta}}}}$,
then $\tau$ must be a local isometry. This proves the theorem.
\end{proof}


\nocite{Ramanathan.1990}
\nocite{Hasanis.2007}

\bibliographystyle{amsplain} 
\bibliography{BiblGeometria}

\end{document}